\theoremstyle{definition}
\newtheorem{theorem}{Theorem}[section]
\newtheorem{lemma}[theorem]{Lemma}
\newtheorem{proposition}[theorem]{Proposition}
\newtheorem{corollary}[theorem]{Corollary}
\newtheorem{remark}[theorem]{Remark}
\theoremstyle{remark}
\crefname{equation}{}{}
\crefname{theorem}{Theorem}{Theorems}
\crefname{lemma}{Lemma}{Lemmas}
\crefname{corollary}{Corollary}{Corollaries}
\crefname{proposition}{Proposition}{Propositions}
\crefname{remark}{Remark}{Remarks}
\crefname{section}{Section}{Sections}
\crefname{figure}{Figure}{Figures}
\DeclarePairedDelimiter{\abs}{\lvert}{\rvert}
\DeclarePairedDelimiter{\norm}{\lVert}{\rVert}
\DeclareMathOperator{\vspan}{span} 	
\DeclareMathOperator{\Real}{Re}
\DeclareMathOperator{\Imag}{Im}
\DeclareMathOperator*{\argmin}{argmin}
\newcommand{\conj}[1]{\overline{#1}}		
\newcommand{\C}{\mathbb{C}}
\newcommand{\R}{\mathbb{R}}
\newcommand{\de}{\,\text{d}}	
\newcommand{\dde}{\text{d}}		
\newcommand{\sol}{\texttt{x}}		
\newcommand{\err}{\texttt{err}}		
\newcommand{\res}{\texttt{res}}		
\newcommand{\f}{\texttt{f}}			
\newcommand{\qf}{\texttt{q}}		
\renewcommand{\vec}[1]{\boldsymbol{#1}}		
\newcommand{\kryl}{\mathcal{K}}		
\newcommand{\rat}{\mathcal{Q}}		
\newcommand{\poly}{\Pi}				
\bfseries\color{red!60!black},
\bfseries\color{blue!60!black}
\renewcommand*\env@matrix[1][*\c@MaxMatrixCols c]{%
  \hskip -\arraycolsep
  \let\@ifnextchar\new@ifnextchar
  \array{#1}}
\numberwithin{equation}{section}
\begin{document}

\title[Rational Krylov error bounds for matrix functions]{Error bounds for the approximation of matrix functions with rational Krylov methods}

\author{Igor Simunec}
\address{Scuola Normale Superiore, Piazza dei Cavalieri 7, 56126 Pisa, Italy}
\email{igor.simunec@sns.it}

\begin{abstract}
	We obtain an expression for the error in the approximation of $f(A) \vec b$ and $\vec b^T f(A) \vec b$ with rational Krylov methods, where $A$ is a symmetric matrix, $\vec b$ is a vector and the function $f$ admits an integral representation. 
	The error expression is obtained by linking the matrix function error with the error in the approximate solution of shifted linear systems using the same rational Krylov subspace, and it can be exploited to derive both a priori and a posteriori error bounds. 
	The error bounds are a generalization of the ones given in [T.~Chen, A.~Greenbaum, C.~Musco, C.~Musco, SIAM J.~Matrix Anal.~Appl., 43 (2022), pp.~787--811] for the Lanczos method for matrix functions. 
	A technique that we employ in the rational Krylov context can also be applied to refine the bounds for the Lanczos case.
\end{abstract}

\subjclass[2010]{65F60}
\keywords{matrix function, rational Krylov method, error bound}

\maketitle

\section{Introduction}

An important problem in numerical linear algebra is the computation of the action of a matrix function $f(A)$ \cite{Higham08-book} on a vector $\vec b$, where the matrix $A$ is usually large and sparse. The matrix-vector product $f(A) \vec b$ can be efficiently approximated via projection on polynomial \cite{Saad92, EFLSV02, FGS14} or rational Krylov subspaces \cite{Ruhe94, DruskinKnizhnerman98, MoretNovati04, GuettelKnizhnerman13, Guettel13, BenziSimunec22}, without resorting to the expensive computation of the whole matrix function $f(A)$. Each iteration of a polynomial or rational Krylov methods requires, respectively, the computation of a matrix-vector product or the solution of a shifted linear system with the matrix $A$. 

In applications it is often essential to have bounds or estimates on the error of the Krylov approximation, in order to either predict the number of iterations needed to reach a given accuracy or to have a reliable stopping criterion. In the literature there have been several papers devoted to developing a priori and a posteriori error bounds and estimates, both in the polynomial \cite{FrommerSimoncini08, FrommerSchweitzer16, CGMM21} and in the rational Krylov setting \cite{BeckermannReichel09, MoretNovati19, MasseiRobol21}.
In particular, the authors of \cite{CGMM21} analyze the error of the Lanczos method for functions of Hermitian matrices, and by means of the Cauchy integral formula they derive an expression for the error that can be used to obtain both a priori and a posteriori error bounds.

In this work we follow a similar approach to analyze the error in the approximation of $f(A) \vec b$ by means of a rational Krylov method, and we derive both a priori and a posteriori bounds in the case of a symmetric matrix $A$, for functions that have a certain integral representation. The results that we obtain can be interpreted as a generalization to the rational Krylov case of the ones presented in~\cite{CGMM21}.

The rest of the paper is organized as follows. In \cref{subsec:basic-definitions} we introduce basic definitions regarding Krylov subspaces and some notation that is used throughout the paper. 
In \cref{sec:rational-krylov--linear-sytems} we establish properties of the residuals of shifted linear systems solved with a rational Krylov method. 
These properties are used in \cref{sec:integral-error-expression} to derive integral expressions of the matrix function error in terms of errors or residuals of shifted linear systems, which are in turn used to obtain a priori and a posteriori error bounds. 
These bounds are discussed in detail, respectively, in \cref{subsec:a-priori-bounds} and \cref{subsec:a-posteriori-bounds}. 
In \cref{sec:linear-system-bounds} we include an bound on the residual of a linear system solved with a rational Krylov method, which is required for the a priori bounds on the matrix function error. 
In \cref{sec:quadratic-forms} we generalize our approach to quadratic forms $\vec b^T f(A) \vec b$, and in \cref{sec:numerical-experiments} we illustrate the bounds with some numerical experiments. \cref{sec:conclusions} contains some concluding remarks.

\subsection{Basic definitions}
\label{subsec:basic-definitions}

Given a matrix $A \in \R^{n \times n}$ and a vector $\vec b \in \R^n$, a polynomial Krylov subspace is defined by
\begin{equation*}
	\kryl_m(A,\vec b) = \vspan\{ \vec b, A \vec b, \dots, A^{m-1} \vec b\} = \{ p(A) \vec b : p \in \poly_{m-1}\},
\end{equation*}
where $\poly_{m-1}$ denotes the set of polynomials of degree $\le m-1$.
Given a sequence of poles~$\{ \xi_j \}_{j \ge 1} \subset (\C\cup \{\infty\}) \setminus {\sigma(A)}$, where $\sigma(A)$ denotes the spectrum of $A$, we can define the denominator polynomial $q_{m-1}(z) = \prod_{j = 1}^{m-1}(1 - z/\xi_j)$ and the associated rational Krylov subspace
\begin{equation*}
	\rat_m(A,\vec b) = q_{m-1}(A)^{-1} \kryl_m(A, \vec b) = \Big\{ r(A) \vec b : r(z) = \frac{p_{m-1}(z)}{q_{m-1}(z)}, \text{with } p_{m-1} \in \poly_{m-1}\Big\}.
\end{equation*}
Note that $\kryl_m(A, \vec b)$ is a special case of a rational Krylov subspace, obtained by taking all poles $\xi_j = \infty$.

The subspaces $\rat_j(A, \vec b)$ for $j = 1, \dots, m$ form a nested sequence of strictly increasing dimension, provided that $m$ is smaller than the invariance index $M$, i.e.~the smallest integer such that $\kryl_M(A, \vec b) = \kryl_{M+1}(A, \vec b)$. For simplicity, we are going to assume that this is always the case. An orthonormal basis $V_m = [\vec v_1, \dots, \vec v_m]$ of the subspace $\rat_m(A, \vec b)$ can be constructed iteratively using the rational Arnoldi algorithm, introduced by Ruhe in~\cite{Ruhe94}. 

The rational Arnoldi algorithm also outputs two upper Hessenberg matrices $\underline {H_m}$ and $\underline{K_m}$ of size $(m+1) \times m$ that satisfy the rational Arnoldi relation 
\begin{equation*}
	A V_{m+1} \underline{K_m} = V_{m+1} \underline{H_m}.
\end{equation*}
We will denote by $H_m$ and $K_m$ the leading principal $m \times m$ blocks of $\underline{H_m}$ and $\underline{K_m}$. 

If $\xi_m = \infty$, then the last row of $\underline{K_m}$ is zero and $K_m$ is invertible \cite[Section~3.1]{Guettel13}, so we can rewrite the rational Arnoldi relation as
\begin{equation*}
	A V_{m} K_m = V_{m+1} \underline{H_m}
\end{equation*}
and we have
\begin{equation*}
	A_m := V_m^T A V_m = H_m K_m^{-1}.
\end{equation*}
The matrix-vector product $f(A) \vec b$ can be approximated with
\begin{equation}
	\label{eqn:rational-krylov-approximation}
	\f_m := V_m f(A_m) V_m^T \vec b = V_m f(A_m) \vec e_1 \norm{\vec b}_2.
\end{equation}
We refer the reader to \cite{GuettelThesis, Guettel13} for a more thorough discussion of rational Krylov methods for the computation of matrix functions, and to \cite{BerljafaGuettel15} for an analysis of rational Krylov decompositions.

We consider the class of functions $f$ that admit the integral expression
\begin{equation}
	\label{eqn:integral-representation-f}
	f(x) = \int_\Gamma (x - z)^{-1} \de \mu(z),
\end{equation}
for a suitable contour $\Gamma \subset \C$ and measure $\dde \mu(z)$. The representation \cref{eqn:integral-representation-f} reduces to the Cauchy integral formula if $\de \mu(z) = - \dfrac{1}{2 \pi i} f(z) \de z$ and $\Gamma$ is a simple closed curve that encloses $x$, and to the Cauchy-Stieltjes integral representation for $\Gamma = (-\infty, 0]$ and a positive measure~$\dde\mu(z)$ (see e.g.~\cite[Definition~2]{MasseiRobol21}).
The following are some examples of Cauchy-Stieltjes functions \cite[Examples 1.3 and 1.4]{BeckermannReichel09}:
\begin{equation}
	\label{eqn:cauchy-stieltjes-examples}
	\begin{aligned}
		x^{-\alpha} &= \frac{\sin(\alpha \pi)}{\pi}\int_{-\infty}^0 \frac{(-z)^{-\alpha}}{x - z} \de z, \qquad \alpha \in (0,1), \\
		\frac{\log(1+x)}{x} &= - \int_{-\infty}^{-1} \frac{z^{-1}}{x - z} \de z.
	\end{aligned}
\end{equation}

Using the integral representation~\cref{eqn:integral-representation-f} for the function $f$, we can write the error in the rational Krylov approximation of $f(A) \vec b$ in the form
\begin{equation}
	\label{eqn:basic-integral-error-ratkrylov--matvec}
	f(A) \vec b - \f_m = \int_\Gamma \big((A - z I)^{-1} \vec b - V_m (A_m - z I)^{-1} V_m^T \vec b \big) \de \mu(z) 
	= \int_\Gamma \err_m(z) \de \mu(z),
\end{equation}
where $\err_m(z)$ denotes the error in the solution of the shifted linear system $(A - z I) \vec x = \vec b$ using the rational Krylov subspace $\rat_m(A, \vec b)$. Motivated by \cref{eqn:basic-integral-error-ratkrylov--matvec}, in \cref{sec:rational-krylov--linear-sytems} we analyze the solution of shifted linear systems with rational Krylov methods, starting by recalling the polynomial Krylov case.

\section{Results on rational Krylov methods for shifted linear systems}
\label{sec:rational-krylov--linear-sytems}

\subsection{Background: solution of linear systems with FOM}
\label{subsec:linear-systems-fom}

Let us start by recalling some basic facts regarding the solution of shifted linear systems using the full orthogonalization method (FOM) \cite{Saad03}, which will simplify the presentation of the next part.

Recall that after $m$ iterations FOM constructs a basis $V_m$ of the polynomial Krylov subspace $\kryl_m(A, \vec b)$ and approximates the solution to the linear system $A \vec x = \vec b$ with
\begin{equation*}
	\vec x_m = V_m A_m^{-1} \vec e_1 \norm{\vec b}_2, \qquad \text{where } A_m = V_m^T A V_m.
\end{equation*}
This is equivalent to imposing that the residual $\vec r_m = \vec b - A \vec x_m$ is orthogonal to the basis~$V_m$. 

The residual $\vec r_m$ can be elegantly expressed in terms of the characteristic polynomial of~$A_m$. If we denote by $\chi_m(z)$ the characteristic polynomial of $A_m$, that is $\chi_m(z) = \det(z I - A_m)$, by \cite[eq.~(3.8)]{PPV95} we have 
\begin{equation}
	\label{eqn:conjugate-gradient-residual-charpoly}
	\vec r_m = \chi_m(A) \vec b / \chi_m(0).
\end{equation}
Moreover, since $\vec r_m \in \kryl_{m+1}(A, \vec b)$ and $\vec r_m \perp \kryl_m(A, \vec b)$, the residual $\vec r_m$ is proportional to the next basis vector $\vec v_{m+1}$ (see, e.g.~\cite[Proposition~6.7]{Saad03}).

A similar argument can be used for the solution of a shifted linear system $(A - t I) \vec x(t) = \vec b$, for any $t \in \R$. Due to the shift invariance of polynomial Krylov subspaces, i.e.~$\kryl_m(A - t I, \vec b) = \kryl_m(A, \vec b)$, after~$m$ iterations of FOM applied to the shifted system we have constructed the same basis $V_m$ as in the case of the linear system $A \vec x = \vec b$, and therefore the approximate solution is given by
\begin{equation*}
	\vec x_m(t) = V_m (A_m^t)^{-1} \vec e_1 \norm{\vec b}_2, \qquad \text{where } A_m^t := V_m^T (A - t I) V_m = A_m - t I.
\end{equation*}
The residual can be expressed using \eqref{eqn:conjugate-gradient-residual-charpoly} with $A_m^t$ in place of $A_m$, yielding
\begin{equation*}
	\vec r_m(t) = \vec b - (A - t I) \vec x_m(t) = \chi_m^t(A - t I) \vec b / \chi_m^t(0),
\end{equation*}
where 
\begin{equation*}
	\chi_m^t(z) := \det(z I - A_m^t) = \chi_m(z + t).
\end{equation*}
With simple algebraic manipulations, the above expression can be rewritten as
\begin{equation}
	\label{eqn:conjugate-gradient-shifted-residual-charpoly}
	\vec r_m(t) = \chi_m(A) \vec b / \chi_m(t) = \frac{\chi_m(0)}{\chi_m(t)} \vec r_m(0).
\end{equation}

In other words, the residuals of shifted linear systems are all collinear, and they are proportional to $\vec v_{m+1}$, the next vector in the Krylov basis. The constant in \eqref{eqn:conjugate-gradient-shifted-residual-charpoly} can be written explicitly in terms of the eigenvalues of the projected matrix $A_m$. These facts are well known in the literature, see for instance~\cite[Proposition~2.1]{Simoncini03} and \cite[Lemma~5]{EFLSV02}.

\subsection{Rational Arnoldi decompositions}

In order to generalize the results of \cref{subsec:linear-systems-fom} to rational Krylov subspaces, we start by recalling some properties of rational Arnoldi decompositions.
Let us consider the rational Arnoldi decomposition
\begin{equation*}
	A V_{m+1} \underline{K_m} = V_{m+1} \underline{H_m}
\end{equation*}
obtained after $m$ iterations of the rational Arnoldi algorithm. We can write this relation more explicitly as
\begin{equation}
	\label{eqn:rad-explicit}
	A V_m K_m + A \vec v_{m+1} k_{m+1, m} \vec e_m^T = V_m H_m + \vec v_{m+1} h_{m+1, m} \vec e_m^T,
\end{equation}
where $h_{m+1, m}$ and $k_{m+1, m}$ are the last subdiagonal elements of $\underline{H_m}$ and $\underline{K_m}$, respectively.    
The ratios of corresponding subdiagonal elements of $\underline{H_m}$ and $\underline{K_m}$ are the poles ${\xi_1, \dots, \xi_m}$ of the rational Krylov subspace. In particular, $h_{m+1, m}/k_{m+1, m} = \xi_m$, so $k_{m+1, m} = 0$ when $\xi_m = \infty$. 
We refer to \cite[Section~5.1]{GuettelThesis} or \cite{BerljafaGuettel15} for more details. 

Assuming that $K_m$ is nonsingular, for all $z \in \C$ we have
\begin{equation}
	\label{eqn:shifted-rad-explicit-invertible-Km}
	(A -z I) V_m = V_m (H_m K_m^{-1} - z I) + (h_{m+1, m}I - k_{m+1, m} A )\vec v_{m+1} \vec e_m^T K_m^{-1}.
\end{equation}
Defining $A_m = V_m^T A V_m$, we see from \cref{eqn:rad-explicit} that 
\begin{equation*}
	A_m = H_m K_m^{-1} - k_{m+1, m} V_m^T A \vec v_{m+1} \vec e_m^T K_m^{-1},
\end{equation*}
and combining with \cref{eqn:shifted-rad-explicit-invertible-Km} we get for $A_m$ the identity
\begin{align}
	\label{eqn:shifted-rad-explicit-Am}
	\nonumber
	(A - zI) V_m &= V_m (A_m - z I) + \left(h_{m+1, m}I - k_{m+1, m} (I - V_m V_m^T) A \right) \vec v_{m+1} \vec e_m^T K_m^{-1} \\
	&= V_m (A_m - z I) + (I - V_m V_m^T) (h_{m+1, m}I - k_{m+1, m} A ) \vec v_{m+1} \vec e_m^T K_m^{-1},
\end{align}
where we used the fact that $\vec v_{m+1} \perp \vspan(V_m)$ for the last equality.

\subsection{Solution of shifted linear systems}

The results that we derive in this section have strong similarities with the ones presented in \cref{subsec:linear-systems-fom} for FOM. Consider the family of shifted linear system
\begin{equation*}
	(A - z I) \vec x = \vec b, \qquad z \in \C.
\end{equation*}
In analogy to FOM and to \cref{eqn:rational-krylov-approximation}, we can extract from the rational Krylov subspace $\rat_m(A, \vec b)$ the approximate solution
\begin{equation}
	\label{eqn:shifted-linsys-solution-ratkryl}
	\sol_m(z) = V_m (A_m - z I)^{-1} \vec e_1 \norm{\vec b}_2.
\end{equation}
Let us introduce the notation 
\begin{align*}
	\res_m(z) &= \vec b - (A - z I) \sol_m(z) \\
	\err_m(z) &= (A - z I)^{-1} \res_m(z)
\end{align*}
for the linear system residual and error, respectively.
Using \cref{eqn:shifted-rad-explicit-Am}, the residual $\res_m(z)$ can be written as
\begin{equation}
	\label{eqn:shifted-residuals-collinearity}
	\res_m(z) = -(I - V_m V_m^T) (h_{m+1, m}I - k_{m+1, m} A ) \vec v_{m+1} \vec e_m^T K_m^{-1}(A_m - tI)^{-1} \vec e_1 \norm{\vec b}_2.
\end{equation}
This shows that the residuals are collinear for all $z \in \C$. In particular, when $\xi_m = \infty$ we have $k_{m+1, m} = 0$ and \cref{eqn:shifted-residuals-collinearity} simplifies to
\begin{equation}
	\label{eqn:shifted-residuals-collinearity-infty}
	\res_m(z) = - h_{m+1, m} \vec v_{m+1} \vec e_m^T K_m^{-1}(A_m - z I)^{-1} \vec e_1 \norm{\vec b}_2,
\end{equation}
i.e.~the residuals are collinear to the next basis vector $\vec v_{m+1}$, as in the polynomial case (see \cref{subsec:linear-systems-fom}). The identity \cref{eqn:shifted-residuals-collinearity-infty} has been used in past literature, see for instance \cite[Section~6.6.2]{GuettelThesis} and \cite[Section~4.3]{BRS22}.

For simplicity, let us focus first on the case $z = 0$, and let us use the notation $\res_m(0) = \res_m$ and similarly $\sol_m(0) = \sol_m$. 
The residual $\res_m$ is orthogonal to $\rat_m(A, \vec b)$ because of the factor $(I - V_m V_m^T)$ in \cref{eqn:shifted-residuals-collinearity}; recalling that $\xi_m = h_{m+1, m}/k_{m+1, m}$, we also see that $\res_m$ belongs to the subspace
\begin{equation*}
	(I - \xi_m^{-1} A) \rat_{m+1}(A, \vec b) = q_{m-1}(A)^{-1} \kryl_{m+1}(A, \vec b) \supset \rat_m(A, \vec b).
\end{equation*}
If we let $\chi_m$ denote the characteristic polynomial of $A_m$, that is $\chi_m(z) = \det(z I - A_m)$, by~\cite[Lemma~4.5]{GuettelThesis} we have
\begin{equation*}
	\chi_m(A) q_{m-1}(A)^{-1} \vec b \perp \rat_m(A, \vec b),
\end{equation*}
and moreover $\chi_m(A) q_{m-1}(A)^{-1} \vec b \in q_{m-1}(A)^{-1} \kryl_{m+1}(A, \vec b)$. The vectors $\chi_m(A) q_{m-1}(A)^{-1} \vec b$ and $\vec r_m$ belong to the same subspace of dimension $m+1$ which contains $\rat_m(A, \vec b)$ and they are both orthogonal to $\rat_m(A, \vec b)$, so they must be collinear, i.e.~there exists a constant $\alpha \in \R$ such that 
\begin{equation*}
	\res_m = \alpha \chi_m(A) q_{m-1}(A)^{-1} \vec b.
\end{equation*}
The value of the constant $\alpha$ can be determined by looking at the constant terms in the identity
\begin{equation*}
	q_{m-1}(A) \vec b - q_{m-1}(A) A \sol_m = \alpha \chi_m(A) \vec b,
\end{equation*}
which gives us
\begin{equation*}
	q_{m-1}(0) = \alpha \chi_m(0).
\end{equation*}
As a consequence, we have the following elegant expression for the residual with $z = 0$,
\begin{equation}
	\label{eqn:ratkryl-linsys-residual-charpoly}
	\res_m(0) = \frac{q_{m-1}(0)}{\chi_m(0)} \chi_m(A) q_{m-1}(A)^{-1} \vec b.
\end{equation}

This result can now be easily generalized to all $z \in \C$. Consider the shifted linear system $(A - z I) \vec x = \vec b$, which has the approximate solution
\begin{equation*}
	\sol_m(z) = V_m (A_m^z)^{-1} \vec e_1 \norm{\vec b}_2, \qquad \text{where } A_m^z = V_m^T(A - z I) V_m = A_m - z I.
\end{equation*}
Observe that the subspace $\rat_m(A, \vec b)$ with poles $\{\xi_1, \dots, \xi_m\}$ coincides with the subspace $\rat_m(A - z I, \vec b)$ defined using the shifted poles $\{ \xi_1 - z, \, \dots, \, \xi_m - z \}$. The denominator polynomial associated to this rational Krylov subspace is therefore
\begin{equation*}
	q_{m-1}^z (z) = \prod_{j = 1}^{m-1} (x - (\xi_j - z)) = q_{m-1}(x + z),
\end{equation*}
and the characteristic polynomial $\chi_m^z$ of the projected matrix $A_m^z$ is 
\begin{equation*}
	\chi_m^z(x) = \det(x I - A_m^z) = \chi_m(x + z).
\end{equation*}
If we write the residual $\res_m(z) = \vec b - (A - z I) \sol_m(z)$ using \eqref{eqn:ratkryl-linsys-residual-charpoly} with $A$ replaced by $A - z I$, we get
\begin{align}
	\nonumber
	\res_m(z) &= \frac{q_{m-1}^z(0)}{\chi_m^z(0)} \chi_m^z(A - z I) q_{m-1}^z(A - z I)^{-1} \vec b \\
	\nonumber
	&= \frac{q_{m-1}(z)}{\chi_m(z)} \chi_m(A) q_{m-1}(A)^{-1} \vec b \\
	\label{eqn:ratkryl-shifted-linsys-residual-charpoly}
	&= \frac{q_{m-1}(z)}{q_{m-1}(0)} \cdot \frac{\chi_m(0)}{\chi_m(z)} \, \res_m (0).
\end{align}
If we let $\theta_j^{(m)}$, $j = 1, \dots, m$, denote the eigenvalues of $A_m$, we obtain the following more explicit expression for the shifted residual:
\begin{equation*}
	\res_m(z) = \varphi_m(z) \res_m (0), \qquad \text{where } \varphi_m(z) = \prod_{j = 1}^{m-1} \frac{\xi_j - z}{\xi_j} \prod_{j = 1}^m \frac{\theta_j^{(m)}}{\theta_j^{(m)} - z}.
\end{equation*}
Given $w \in \C$, it is easy to derive from \cref{eqn:ratkryl-shifted-linsys-residual-charpoly} the identity
\begin{equation}
	\label{eqn:linsys-residual-relation-z-w}
	\res_m (z) = \frac{q_{m-1}(z)}{q_{m-1}(w)} \cdot \frac{\chi_m(w)}{\chi_m(z)} \, \res_m (w).
\end{equation} 
Furthermore, comparing \cref{eqn:linsys-residual-relation-z-w} with \cref{eqn:shifted-residuals-collinearity}, we also obtain
\begin{equation*}
	\frac{q_{m-1}(z)}{q_{m-1}(w)} \cdot \frac{\chi_m(w)}{\chi_m(z)} = \frac{\vec e_m^T K_m^{-1} (A_m - z I)^{-1} \vec e_1}{\vec e_m^T K_m^{-1} (A_m - w I)^{-1} \vec e_1}.
\end{equation*}
We are going to use the previous expressions in \cref{sec:integral-error-expression} to obtain bounds for the error in the approximation of $f(A) \vec b$.

\section{Bounds for the matrix function error}
\label{sec:integral-error-expression}

Recalling \cref{eqn:basic-integral-error-ratkrylov--matvec}, the error in the approximation of $f(A) \vec b$ can be written as
\begin{equation*}
	f(A) \vec b - \f_m = \int_\Gamma \err_m(z) \de \mu(z) = \int_\Gamma (A - z I)^{-1} \res_m(z) \de \mu(z).
\end{equation*}
Using~\eqref{eqn:linsys-residual-relation-z-w} we can write $\res_m(z)$ in terms of $\res_m(w)$ for any $z$, $w \in \C$.
A similar relation also holds for $\err_m(z)$ and $\err_m(w)$, indeed we have
\begin{equation}
	\label{eqn:linsys-error-relation-z-w}
	\begin{aligned}
		\err_m(z) &= (A - z I)^{-1} \res_m(z) \\
		&= \frac{q_{m-1}(z)}{q_{m-1}(w)} \cdot \frac{\chi_m(w)}{\chi_m(z)} (A - z I)^{-1} \res_m(w) \\
		&= \frac{q_{m-1}(z)}{q_{m-1}(w)} \det \big(h_{w, z}(A_m)\big) h_{w, z}(A) \err_m(w),
	\end{aligned}
\end{equation}
where we used the notation $h_{w, z}(t) := \dfrac{t - w}{t - z}$, borrowed from \cite{CGMM21}. We are also going to use the notation $k_z(t) := (t - z)^{-1}$. 

The identities \cref{eqn:linsys-residual-relation-z-w,eqn:linsys-error-relation-z-w} can be used to prove the following theorem, which gives us an expression for the error in the approximation of $f(A) \vec b$ in terms of the error or residual of shifted linear systems. 

\begin{theorem}
	\label{thm:integral-error-ratkrylov--matvec}
	Assume that $f$ has the integral representation \cref{eqn:integral-representation-f}, and denote by $\f_m$ the approximation to $f(A) \vec b$ after $m$ iterations of a rational Krylov method with denominator polynomial $q_{m-1}$. Let $\mathcal{D} = \{\xi_1, \dots, \xi_{m-1} \} \cup \sigma(A) \cup \sigma(A_m)$ and let $w$ be a function $w : \Gamma \to \C \setminus \mathcal{D}$. We have
	\begin{equation}
		\label{eqn:integral-error-ratkrylov--matvec}
		\begin{aligned}
			f(A) \vec b - \f_m &= \int_\Gamma \frac{q_{m-1}(z)}{q_{m-1}(w(z))} \det \big( h_{w(z), z}(A_m) \big) h_{w(z), z}(A) \err_m(w(z)) \de \mu(z) \\
			&= \int_\Gamma \frac{q_{m-1}(z)}{q_{m-1}(w(z))} \det \big( h_{w(z), z}(A_m) \big) \, k_z(A) \, \res_m(w(z)) \de \mu(z),
		\end{aligned}
	\end{equation}
\end{theorem}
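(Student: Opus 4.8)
The plan is to start from the basic integral error expression \cref{eqn:basic-integral-error-ratkrylov--matvec}, namely
\begin{equation*}
	f(A) \vec b - \f_m = \int_\Gamma \err_m(z) \de \mu(z) = \int_\Gamma (A - zI)^{-1} \res_m(z) \de \mu(z),
\end{equation*}
which follows directly from plugging the integral representation \cref{eqn:integral-representation-f} into $f(A)\vec b - \f_m$ and recognizing that $(A - zI)^{-1}\vec b - V_m(A_m - zI)^{-1}V_m^T\vec b$ is precisely $\err_m(z)$. Then, for each fixed $z \in \Gamma$, I would apply the pointwise identity \cref{eqn:linsys-error-relation-z-w} with $w$ replaced by $w(z)$, which rewrites $\err_m(z)$ as
\begin{equation*}
	\err_m(z) = \frac{q_{m-1}(z)}{q_{m-1}(w(z))} \det\big(h_{w(z), z}(A_m)\big)\, h_{w(z), z}(A)\, \err_m(w(z)).
\end{equation*}
Substituting this into the integrand yields the first equality in \cref{eqn:integral-error-ratkrylov--matvec}. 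For the second equality, I would use that $\err_m = (A - zI)^{-1}\res_m$ together with the factorization $h_{w(z),z}(A)\err_m(w(z)) = h_{w(z),z}(A)(A - w(z)I)^{-1}\res_m(w(z)) = (A - zI)^{-1}\res_m(w(z)) = k_z(A)\res_m(w(z))$, since $h_{w,z}(t)\cdot(t-w)^{-1} = (t-z)^{-1} = k_z(t)$ as rational functions, and these commute as functions of the single matrix $A$.

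The main technical point to check carefully is that all the objects appearing are well-defined, and this is exactly what the hypothesis $w : \Gamma \to \C \setminus \mathcal{D}$ with $\mathcal{D} = \{\xi_1, \dots, \xi_{m-1}\} \cup \sigma(A) \cup \sigma(A_m)$ ensures: $q_{m-1}(w(z)) \ne 0$ because $w(z) \notin \{\xi_1,\dots,\xi_{m-1}\}$; $h_{w(z),z}(A)$ and $k_z(A) = (A - zI)^{-1}$ are well-defined because $z \notin \sigma(A)$ (which holds since $z \in \Gamma$ and $\Gamma$ avoids $\sigma(A)$ for the integral representation to make sense, and also because $\err_m(z)$ itself requires $z \notin \sigma(A)$); the factor $\det(h_{w(z),z}(A_m)) = \prod_j \frac{\theta_j^{(m)} - w(z)}{\theta_j^{(m)} - z}$ is finite because $z \notin \sigma(A_m)$; and $\err_m(w(z))$, $\res_m(w(z))$ are well-defined because $w(z) \notin \sigma(A) \cup \sigma(A_m)$. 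One should also note that the identity \cref{eqn:linsys-error-relation-z-w} was derived assuming $\chi_m(z) \ne 0$, i.e.\ $z \notin \sigma(A_m)$, which again is guaranteed for $z \in \Gamma$ provided $\Gamma \cap \sigma(A_m) = \emptyset$; if $\Gamma$ could meet $\sigma(A_m)$ one would need a limiting or removable-singularity argument, but in the stated setting this is not an issue since $\sigma(A_m) \subseteq \sigma(A)$'s convex hull and typically the contour is chosen to avoid it.

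The only genuine obstacle, and it is mild, is bookkeeping the composition $z \mapsto w(z)$ throughout: the identity \cref{eqn:linsys-error-relation-z-w} is stated for constants $z, w$, so I must verify it remains valid when $w$ is allowed to depend on $z$ — which it trivially does, since for each fixed $z$ the value $w(z)$ is just a fixed complex number in $\C \setminus \mathcal{D}$ and nothing in the derivation of \cref{eqn:linsys-error-relation-z-w} used any relationship between $z$ and $w$. Thus the proof is essentially a one-line substitution into \cref{eqn:basic-integral-error-ratkrylov--matvec} followed by the two algebraic rewrites above, and I would present it in roughly that form, pointing out where each hypothesis on $w$ and $\mathcal{D}$ is used so that the integrand is a well-defined (and, over $\Gamma$, integrable) function of $z$.
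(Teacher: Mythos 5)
Your proposal is correct and follows essentially the same route as the paper: substitute the pointwise identity \cref{eqn:linsys-error-relation-z-w} with $w = w(z)$ into the basic error expression \cref{eqn:basic-integral-error-ratkrylov--matvec}, then pass from the error form to the residual form via $h_{w,z}(t)(t-w)^{-1} = k_z(t)$. Your additional bookkeeping of where each condition in $\mathcal{D}$ is used is more explicit than the paper's two-line proof, but the argument is the same.
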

\begin{proof}
	In \cref{eqn:basic-integral-error-ratkrylov--matvec}, for each $z \in \Gamma$ replace $\err_m(z)$ with the expressions in \cref{eqn:linsys-error-relation-z-w} using $w = w(z)$ to obtain the first identity in \cref{eqn:integral-error-ratkrylov--matvec}. The second identity is then easily obtained by recalling the definition of $k_z$.
\end{proof}

\begin{remark}
	\label{rem:comparison-with-lanczos-case}
	An identity similar to the one in \cref{thm:integral-error-ratkrylov--matvec} has been given in \cite[Corollary~2.5]{CGMM21} for the error of the approximation of $f(A) \vec b$ with the Lanczos method, using the Cauchy integral representation of $f$. 
	Indeed, \cref{thm:integral-error-ratkrylov--matvec} reduces to \cite[Corollary~2.5]{CGMM21} by taking $q_{m-1}(z) \equiv 1$, $\dde \mu(z) = -\dfrac{1}{2 \pi i} f(z) \de z$ and a constant $w(z) = z$.
	Apart from the factors involving the denominator $q_{m-1}$ of the rational Krylov subspace, an important difference from the result in \cite{CGMM21} is the use of a function $w(z)$ instead of a constant parameter $w$. This modification, which might appear minor at a quick glance, turns out to be a crucial element for dealing with the rational Krylov case. 
	We are going to thoroughly discuss this point in \cref{subsec:bound-discussion}.
	
\end{remark}

Although it is not practically feasible to evaluate the error expression in \cref{eqn:integral-error-ratkrylov--matvec} directly since it depends explicitly on the matrix $A$, it can be employed to derive bounds for the error that are easily computable. 
The following corollary is a simple consequence of \cref{thm:integral-error-ratkrylov--matvec}, and it is the analogue of \cite[Theorem~2.6]{CGMM21} for rational Krylov instead of Lanczos.
We use the notation $\norm{g}_X = \max_{x \in X} \abs{g}$ for the supremum norm of the function $g$ on the set $X \subset \C$, and $\norm{A}_2$ for the spectral norm of the matrix $A$. We denote the eigenvalues of $A_m$ by $\lambda_i(A_m)$, $i = 1, \dots, m$.  

\begin{corollary}
	\label{cor:integral-error-bound-ratkrylov--general}
	With the same hypothesis of \cref{thm:integral-error-ratkrylov--matvec}, assume further that $A$ is symmetric and that for some $\mathcal{S}_0, \mathcal{S}_1, \dots, \mathcal{S}_m \subset \R$ we have $\sigma(A) \subset \mathcal{S}_0$, and $\lambda_i(A_m) \in \mathcal{S}_i$ for all~$i = 1, \dots, m$. Then we have the inequalities
	\begin{equation*}
		\norm{f(A) \vec b - \f_m}_2 \le \int_\Gamma \abs*{ \frac{q_{m-1}(z)}{q_{m-1}(w(z))}} \prod_{j = 0}^m \norm{h_{w(z), z}}_{\mathcal{S}_j} \cdot \norm{\err_m(w(z))}_2 \, \abs{\dde \mu(z)}
	\end{equation*}
	and
	\begin{equation*}
		\norm{f(A) \vec b - \f_m}_2 \le \int_\Gamma \abs*{ \frac{q_{m-1}(z)}{q_{m-1}(w(z))}} \prod_{j = 1}^m \norm{h_{w(z), z}}_{\mathcal{S}_j} \cdot \norm{k_z}_{\mathcal{S}_0} \cdot \norm{\res(w(z))}_2 \, \abs{\dde \mu(z)}.
	\end{equation*}
\end{corollary}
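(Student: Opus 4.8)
The plan is to bound the two integral identities of \cref{eqn:integral-error-ratkrylov--matvec} termwise, by passing to norms inside the integral and then estimating each factor of the integrand separately. For the first identity I would apply the triangle inequality for vector-valued integrals,
\[
	\norm*{\int_\Gamma \vec g(z) \de \mu(z)}_2 \le \int_\Gamma \norm{\vec g(z)}_2 \abs{\dde \mu(z)},
\]
which is legitimate under the standing integrability assumptions attached to the representation \cref{eqn:integral-representation-f}, together with the hypothesis $w : \Gamma \to \C \setminus \mathcal{D}$ that keeps every factor of the integrand away from its singularities: $q_{m-1}(w(z)) \ne 0$ because $w(z) \notin \{\xi_1, \dots, \xi_{m-1}\}$, and the resolvents $(A - w(z) I)^{-1}$ and $(A_m - w(z) I)^{-1}$ exist because $w(z) \notin \sigma(A) \cup \sigma(A_m)$. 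This reduces matters to bounding, for each fixed $z \in \Gamma$, the scalar prefactor $\abs{q_{m-1}(z)/q_{m-1}(w(z))}$, the determinant $\abs{\det(h_{w(z),z}(A_m))}$, and the matrix-times-vector norm $\norm{h_{w(z),z}(A) \err_m(w(z))}_2$.

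For the determinant I would use that $A$ symmetric forces $A_m = V_m^T A V_m$ symmetric, hence diagonalizable with real eigenvalues, so $\det(h_{w(z),z}(A_m)) = \prod_{i=1}^m h_{w(z),z}(\lambda_i(A_m))$; since $\lambda_i(A_m) \in \mathcal{S}_i \subset \R$ this gives $\abs{\det(h_{w(z),z}(A_m))} \le \prod_{i=1}^m \norm{h_{w(z),z}}_{\mathcal{S}_i}$. For the matrix-times-vector term I would invoke submultiplicativity, $\norm{h_{w(z),z}(A) \err_m(w(z))}_2 \le \norm{h_{w(z),z}(A)}_2 \, \norm{\err_m(w(z))}_2$, followed by the functional calculus for the symmetric matrix $A$, namely $\norm{g(A)}_2 = \max_{\lambda \in \sigma(A)} \abs{g(\lambda)} = \norm{g}_{\sigma(A)}$, applied with $g = h_{w(z),z}$; using $\sigma(A) \subset \mathcal{S}_0$ then yields $\norm{h_{w(z),z}(A)}_2 \le \norm{h_{w(z),z}}_{\mathcal{S}_0}$. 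Collecting the three estimates gives the integrand bound $\abs{q_{m-1}(z)/q_{m-1}(w(z))} \prod_{j=0}^m \norm{h_{w(z),z}}_{\mathcal{S}_j} \norm{\err_m(w(z))}_2$, and integrating against $\abs{\dde \mu}$ proves the first inequality.

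The second inequality follows by running the same argument on the second identity in \cref{eqn:integral-error-ratkrylov--matvec}: the only change is that $h_{w(z),z}(A) \err_m(w(z))$ is replaced by $k_z(A) \res_m(w(z))$, and $\norm{k_z(A)}_2 = \norm{k_z}_{\sigma(A)} \le \norm{k_z}_{\mathcal{S}_0}$ again by the functional calculus. This produces the product over $j = 1, \dots, m$ together with the extra factor $\norm{k_z}_{\mathcal{S}_0}$ and the residual norm $\norm{\res_m(w(z))}_2$, as claimed.

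There is no deep difficulty here: the argument is a routine combination of the triangle inequality, submultiplicativity of the spectral norm, and the functional calculus for symmetric matrices. The only points that warrant a little attention are the justification of moving the norm inside the integral (which rests on the integrability built into the class \cref{eqn:integral-representation-f} and on $w$ avoiding $\mathcal{D}$; if the resulting integral diverges the bound is vacuously true) and the implicit requirement that the supremum norms $\norm{h_{w(z),z}}_{\mathcal{S}_j}$ be finite, i.e.\ that the sets $\mathcal{S}_j$ not contain the pole $t = z$ of $h_{w(z),z}$ — a condition that is automatically satisfied in the concrete choices of $\mathcal{S}_j$ and $\Gamma$ made in \cref{subsec:a-priori-bounds} and \cref{subsec:a-posteriori-bounds}.
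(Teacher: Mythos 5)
Your proposal is correct and follows essentially the same route as the paper: take norms inside the integral identities of \cref{thm:integral-error-ratkrylov--matvec}, bound $\abs{\det(h_{w(z),z}(A_m))}$ by the product of $\norm{h_{w(z),z}}_{\mathcal{S}_j}$ over the eigenvalues of $A_m$, and bound $\norm{h_{w(z),z}(A)}_2$ (resp.\ $\norm{k_z(A)}_2$) by $\norm{h_{w(z),z}}_{\mathcal{S}_0}$ (resp.\ $\norm{k_z}_{\mathcal{S}_0}$) via the spectral theorem. The additional remarks on integrability and on $w$ avoiding $\mathcal{D}$ are sensible housekeeping that the paper leaves implicit.
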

\begin{proof}
	Since the eigenvalues of $A$ are contained in $\mathcal{S}_0$, we have \begin{equation*}
		\norm{h_{w(z), z}(A)}_2 = \max_{\lambda \in \sigma(A)} \abs{h_{w(z), z}(\lambda)} \le \norm{h_{w(z), z}}_{\mathcal{S}_0}.
	\end{equation*}
	Similarly, we have
	\begin{equation*}
		\abs*{\det \big(h_{w(z), z}(A_m)\big) } = \prod_{j = 1}^m \abs* {h_{w(z), z}(\lambda_j(A_m))} \le \prod_{j = 1}^m \norm{h_{w(z), z}}_{\mathcal{S}_j}.
	\end{equation*}
	By applying the above inequalities to the first identity in \cref{thm:integral-error-ratkrylov--matvec}, we obtain
	\begin{align*}
		\norm{f(A) \vec b - \f_m}_2 &\le \int_\Gamma  \abs*{ \frac{q_{m-1}(z)}{q_{m-1}(w(z))}} \cdot \abs{\det(h_{w(z), z}(A_m))} \cdot \norm{h_{w(z), z}(A)}_{2}  \norm{\err_m(w(z))}_2 \, \abs{\dde \mu(z)} \\
		&\le  \int_\Gamma \abs*{ \frac{q_{m-1}(z)}{q_{m-1}(w(z))}} \cdot \norm{h_{w(z), z}}_{\mathcal{S}_0} \cdot \prod_{j = 1}^m \norm{h_{w(z), z}}_{\mathcal{S}_j}  \cdot \norm{\err_m(w(z))}_2 \, \abs{\dde \mu(z)}.
	\end{align*}
	The second inequality follows with the same argument, using the error expression in terms of $\res_m(w(z))$ from \cref{thm:integral-error-ratkrylov--matvec}.
\end{proof}

Observe that if we use a constant function $w(z) \equiv w$ in \cref{cor:integral-error-bound-ratkrylov--general} the bounds are simplified, since the terms $\norm{\err_m(w)}_2$ and $\norm{\res_m(w)}_2$ no longer depend on $z$ and thus can be bounded independently from the integral term. 

\cref{cor:integral-error-bound-ratkrylov--general} can be interpreted as either an a priori or an a posteriori bound, depending on the choices for the sets $\mathcal{S}_j$. 
For example, we can easily obtain an a priori bound by taking $\mathcal{S}_j = [a, b] \supset \sigma(A)$ for $j = 0, 1, \dots, m$.
On the other hand, once $A_m$ has been computed and its eigenvalues are known, we can obtain an a posteriori bound by taking $\mathcal{S}_0 = [a, b]$ and $\mathcal{S}_j = \{ \lambda_j(A_m) \}$, for $j = 1, \dots, m$. The two following corollaries are restatements of \cref{cor:integral-error-bound-ratkrylov--general} in the a priori and a posteriori setting, respectively, using the choices described above for the sets $\mathcal{S}_j$, $j = 0, \dots, m$.

\begin{corollary}
	\label{cor:integral-error-bound--a-priori}
	Under the assumptions of \cref{thm:integral-error-ratkrylov--matvec}, if $A$ is symmetric, we have the a priori bounds
	\begin{equation*}
		\norm{f(A) \vec b - \f_m}_2 \le \int_\Gamma \abs*{ \frac{q_{m-1}(z)}{q_{m-1}(w(z))}} \cdot \norm{h_{w(z), z}}_{[a, b]}^{m+1} \cdot \norm{\err_m(w(z))}_2 \, \abs{\dde \mu(z)}
	\end{equation*}
	and
	\begin{equation*}
		\norm{f(A) \vec b - \f_m}_2 \le \int_\Gamma \abs*{ \frac{q_{m-1}(z)}{q_{m-1}(w(z))}} \cdot \norm{h_{w(z), z}}_{[a, b]}^{m} \cdot \norm{k_z}_{[a, b]} \cdot \norm{\res_m(w(z))}_2 \, \abs{\dde \mu(z)}.
	\end{equation*}	
\end{corollary}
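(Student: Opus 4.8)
The plan is to obtain both inequalities directly from \cref{cor:integral-error-bound-ratkrylov--general} by making the single admissible choice $\mathcal{S}_0 = \mathcal{S}_1 = \dots = \mathcal{S}_m = [a,b]$, where $[a,b]$ is any interval containing $\sigma(A)$, as in the discussion preceding the statement. The only point that must be verified is that this choice is admissible in the sense required by \cref{cor:integral-error-bound-ratkrylov--general}, i.e.\ that $\sigma(A) \subset [a,b]$ (true by the definition of $[a,b]$) and that $\lambda_i(A_m) \in [a,b]$ for $i = 1, \dots, m$. For the latter I would use that, since $A$ is symmetric, the compression $A_m = V_m^T A V_m$ is symmetric, $V_m$ has orthonormal columns, and hence every Rayleigh quotient $\vec y^T A_m \vec y / \vec y^T \vec y$ of $A_m$ coincides with the Rayleigh quotient $(V_m \vec y)^T A (V_m \vec y) / (V_m \vec y)^T (V_m \vec y)$ of $A$. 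By the Courant--Fischer min-max characterization this gives $\lambda_{\min}(A) \le \lambda_i(A_m) \le \lambda_{\max}(A)$ for all $i$, so $\sigma(A_m) \subset [\lambda_{\min}(A), \lambda_{\max}(A)] \subset [a,b]$. (Equivalently, this is the Cauchy interlacing theorem applied to the orthogonal compression $A_m$ of $A$.)

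With this choice the products of supremum norms appearing in \cref{cor:integral-error-bound-ratkrylov--general} collapse: since each $\norm{h_{w(z),z}}_{\mathcal{S}_j} = \norm{h_{w(z),z}}_{[a,b]}$, the factor $\prod_{j=0}^m \norm{h_{w(z),z}}_{\mathcal{S}_j}$ in the error-based bound becomes $\norm{h_{w(z),z}}_{[a,b]}^{m+1}$, while in the residual-based bound $\prod_{j=1}^m \norm{h_{w(z),z}}_{\mathcal{S}_j}$ becomes $\norm{h_{w(z),z}}_{[a,b]}^{m}$ and $\norm{k_z}_{\mathcal{S}_0}$ becomes $\norm{k_z}_{[a,b]}$; all remaining factors ($\abs{q_{m-1}(z)/q_{m-1}(w(z))}$, $\norm{\err_m(w(z))}_2$, $\norm{\res_m(w(z))}_2$, $\abs{\dde\mu(z)}$) are carried through unchanged. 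Substituting these into the two displays of \cref{cor:integral-error-bound-ratkrylov--general} then produces verbatim the two stated a priori bounds.

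I do not expect any genuine obstacle: the corollary is literally \cref{cor:integral-error-bound-ratkrylov--general} evaluated at one explicit feasible family of sets $\mathcal{S}_j$, and the only nontrivial ingredient is the spectral containment $\sigma(A_m) \subset [a,b]$ established above, which is the "key step." One should perhaps record that for the right-hand sides to be finite one needs $w(z) \notin [a,b]$ for every $z \in \Gamma$ (so that $\norm{h_{w(z),z}}_{[a,b]} < \infty$) and, for the second bound, $z \notin [a,b]$ (so that $\norm{k_z}_{[a,b]} < \infty$); both are compatible with the standing hypotheses, since in the relevant integral representations $\Gamma$ is disjoint from $\sigma(A)$ and $w$ maps into $\C \setminus \mathcal{D}$ and may be chosen away from $[a,b]$.
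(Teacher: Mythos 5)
Your proposal is correct and follows exactly the route the paper intends: \cref{cor:integral-error-bound--a-priori} is stated as a specialization of \cref{cor:integral-error-bound-ratkrylov--general} with $\mathcal{S}_j = [a,b] \supset \sigma(A)$ for all $j = 0, \dots, m$, and your verification that $\sigma(A_m) \subset [a,b]$ via the Rayleigh quotient (equivalently, Cauchy interlacing for the orthogonal compression $A_m = V_m^T A V_m$) supplies the one admissibility check the paper leaves implicit.
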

\begin{corollary}
	\label{cor:integral-error-bound--a-posteriori}
	Under the assumptions of \cref{thm:integral-error-ratkrylov--matvec}, if $A$ is symmetric, we have the a posteriori bounds
	\begin{equation*}
		\norm{f(A) \vec b - \f_m}_2 \le \int_\Gamma \abs*{ \frac{q_{m-1}(z)}{q_{m-1}(w(z))} \cdot \frac{\chi_m(w(z))}{\chi_m(z)}}\cdot \norm{h_{w(z), z}}_{[a, b]} \cdot \norm{\err_m(w(z))}_2 \, \abs{\dde \mu(z)}
	\end{equation*}
	and
	\begin{equation*}
		\norm{f(A) \vec b - \f_m}_2 \le \int_\Gamma \abs*{ \frac{q_{m-1}(z)}{q_{m-1}(w(z))} \cdot \frac{\chi_m(w(z))}{\chi_m(z)}}\cdot \norm{k_z}_{[a, b]} \cdot \norm{\res_m(w(z))}_2 \, \abs{\dde \mu(z)}.
	\end{equation*}
\end{corollary}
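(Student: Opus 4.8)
The plan is to obtain both inequalities as direct specializations of \cref{cor:integral-error-bound-ratkrylov--general}, choosing there the sets $\mathcal{S}_0 = [a, b]$ and $\mathcal{S}_j = \{\lambda_j(A_m)\}$ for $j = 1, \dots, m$. This choice is admissible because $\sigma(A) \subset [a,b]$ by assumption, and each eigenvalue of $A_m$ trivially lies in its own singleton. The only genuine work is to simplify the product $\prod_{j=1}^m \norm{h_{w(z), z}}_{\mathcal{S}_j}$ once the sets $\mathcal{S}_1, \dots, \mathcal{S}_m$ have been reduced to singletons.

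First I would note that on a singleton set the supremum norm is just a pointwise evaluation, so that, using the fact that $A_m = V_m^T A V_m$ is symmetric (hence diagonalizable) when $A$ is symmetric,
\begin{equation*}
	\prod_{j=1}^m \norm{h_{w(z), z}}_{\mathcal{S}_j} = \prod_{j=1}^m \abs{h_{w(z), z}(\lambda_j(A_m))} = \abs*{\det\big(h_{w(z), z}(A_m)\big)}.
\end{equation*}
Next, writing $h_{w, z}(t) = (t - w)/(t - z)$ we have $h_{w(z), z}(A_m) = (A_m - w(z) I)(A_m - z I)^{-1}$; since both $z$ and $w(z)$ avoid $\sigma(A_m)$ — the former because $\Gamma$ is disjoint from $\sigma(A_m)$ in the standing setting (otherwise $f(A_m)$ and hence $\f_m$ would not be defined), the latter by the hypothesis $w : \Gamma \to \C \setminus \mathcal{D}$ with $\sigma(A_m) \subset \mathcal{D}$ — the matrix $A_m - z I$ is invertible and
\begin{equation*}
	\det\big(h_{w(z), z}(A_m)\big) = \frac{\det(A_m - w(z) I)}{\det(A_m - z I)} = \frac{(-1)^m \chi_m(w(z))}{(-1)^m \chi_m(z)} = \frac{\chi_m(w(z))}{\chi_m(z)},
\end{equation*}
where the $(-1)^m$ factors cancel because both determinants are of $m \times m$ matrices and $\chi_m(z) = \det(z I - A_m)$.

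Finally I would substitute this identity into the two bounds of \cref{cor:integral-error-bound-ratkrylov--general}. In the first bound the factor $\prod_{j=0}^m \norm{h_{w(z), z}}_{\mathcal{S}_j}$ splits as $\norm{h_{w(z), z}}_{[a, b]} \cdot \abs{\chi_m(w(z))/\chi_m(z)}$, and absorbing the characteristic-polynomial ratio into the prefactor $\abs{q_{m-1}(z)/q_{m-1}(w(z))}$ gives the first a posteriori inequality; in the second bound the $m$ factors $\prod_{j=1}^m \norm{h_{w(z), z}}_{\mathcal{S}_j}$ collapse to $\abs{\chi_m(w(z))/\chi_m(z)}$ while $\norm{k_z}_{\mathcal{S}_0} = \norm{k_z}_{[a, b]}$ is left untouched, giving the second inequality. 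There is no real obstacle here, since the corollary is essentially a rephrasing of \cref{cor:integral-error-bound-ratkrylov--general}; the one point deserving a moment of care is the collapse of the eigenvalue product into the determinant ratio $\chi_m(w(z))/\chi_m(z)$ and the cancellation of the $(-1)^m$ signs, which I would verify explicitly as above.
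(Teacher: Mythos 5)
Your proposal is correct and matches the paper's own route: the paper presents this corollary as a direct restatement of \cref{cor:integral-error-bound-ratkrylov--general} with $\mathcal{S}_0 = [a,b]$ and $\mathcal{S}_j = \{\lambda_j(A_m)\}$ for $j = 1,\dots,m$, with the singleton product collapsing to $\abs{\det(h_{w(z),z}(A_m))} = \abs{\chi_m(w(z))/\chi_m(z)}$ exactly as you compute. Your explicit verification of the sign cancellation and the invertibility of $A_m - zI$ is a harmless elaboration of details the paper leaves implicit.
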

	
If additional a priori information on the spectrum of $A$ is available, it can be incorporated in the sets $\mathcal{S}_j$ in order to get more precise bounds. For example, assume that in addition to the spectral interval $[a, b]$ we know that all except for $10$ eigenvalues of $A$ are contained in the smaller interval $[a, c]$, with $a < c < b$. Then, by the Cauchy interlacing theorem \cite[Theorem~8.1.7]{GolubVanLoan13-book} we can conclude that at most $10$ eigenvalues of $A_m$ are not inside the interval $[a, c]$, so we can take $\mathcal{S}_j = [a, b]$ for $j = 1, \dots, 10$ and $\mathcal{S}_j = [a, c]$ for $j = 11, \dots, m$. This kind of choice for the sets $\mathcal{S}_j$ exploits more spectral information of the matrix compared to the bounds in \cref{cor:integral-error-bound--a-priori}, so it would result in more refined bounds that are able to better capture the convergence behavior of rational Krylov methods.

\subsection{Bound discussion}
\label{subsec:bound-discussion}

The error bounds in \cref{cor:integral-error-bound--a-priori,cor:integral-error-bound--a-posteriori} depend on several parameters, such as the curve $\Gamma$ and the function $w(z)$, and on the choice between $\norm{\err_m(w(z))}_2$ and $\norm{\res_m(w(z))}_2$.

Since the factors $\norm{h_{w, z}}_{[a, b]}$ and $\norm{k_z}_{[a, b]}$ diverge when $z$ approaches the interval $[a, b]$, when using the Cauchy integral formula it is a good idea to take $\Gamma$ as far as possible from the spectrum of $A$. For example, for functions such as $f(z) = z$ with a growth of $o(\abs{z})$ for $\abs{z} \to \infty$, we can choose $\Gamma$ as a keyhole contour around the negative real line, which reduces to $\Gamma = (-\infty, 0]$ in the limit, since the integral on the large circular arc vanishes. See \cite[Section~3.1]{CGMM21} for further discussion on the choice of $\Gamma$.

Regarding the choice between the bound in terms of $\norm{\err_m(w(z))}_2$ and the one in terms of $\norm{\res_m(w(z))}_2$, it is clear that for an a posteriori bound it makes more sense to use the residual norm, since it can be computed quite cheaply once the Krylov basis is available, while the formulation with the error norm requires the exact solution of the linear system, which is significantly more expensive to obtain and therefore impractical.

On the other hand, for an a priori bound neither $\res_m(w(z))$ nor $\err_m(w(z))$ are available because the Krylov basis has not been constructed yet, so in a realistic scenario they have to be bounded using an a priori bound for the linear system error or residual. Which formulation gives the better bound would depend on the quality of the bound for the linear system error or residual, but numerical experiments seem to indicate that the formulation based on the error is usually more accurate. 

The same can be observed for a posteriori bounds as well: the bound in \cref{cor:integral-error-bound--a-posteriori} with the exact error is sharper than the bound with the exact residual. A possible explanation for this fact is that the inequality
\begin{equation*}
	\norm{h_{w, z}(A) \err_m(w)}_2 \le \norm{h_{w, z}}_{[a, b]} \cdot \norm{\err_m(w)}_2
\end{equation*}
is tighter than
\begin{equation*}
	\norm{(A - zI)^{-1} \res_m(w)}_2 \le \norm{k_z}_{[a, b]} \cdot \norm{\res_m(w)}_2,
\end{equation*}
especially when $w$ can be freely chosen (note that the left-hand sides in the two equations above are equal). 
However, this observation is only meaningful for a priori bounds, since in practice one would always use the residual formulation for an a posteriori bound.

A crucial factor for the effective use of the bounds in \cref{cor:integral-error-bound--a-priori,cor:integral-error-bound--a-posteriori} is the choice of the function $w(z)$. The simplest option (and the cheapest for evaluation) is to use a constant function $w(z) \equiv w$, similarly to \cite{CGMM21}, since with this choice we only have to compute or bound the residual (or error) of a single shifted linear system. This turns out to be the most convenient choice for a posteriori bounds, but it often gives underwhelming results when used to obtain a priori bounds, and it may even cause the bounds to diverge (see \cref{subsec:comparison-with-non-optimized-bounds} for an example); in this setting, it is much more effective to use a general function $w(z)$. 

To have a better understanding of this phenomenon, let us focus on the first bound in \cref{cor:integral-error-bound--a-priori}, and let us look at each factor in the integrand separately:

\begin{itemize}
	\item the factor $\abs{ q_{m-1}(z) / q_{m-1}(w) } = \prod_{j = 1}^{m-1} \abs{(z - \xi_j)/(w - \xi_j)}$ is large when $w$ is closer to the set of poles $\{\xi_j\}_j$ compared to $z$, and small when $w$ is farther than $z$ from the poles;
	\item the factor $\norm{h_{w, z}}_{[a, b]}^{m+1}$ is small when $w$ is closer than $z$ to the spectral interval $[a, b]$, and large otherwise;
	\item the factor $\norm{\err_m(w)}_2$ is large when $w$ is close to $[a, b]$, and small when $w$ is close to the poles or when $w$ has a large modulus.
\end{itemize}
A consequence of the different behavior of the factors and their dependence on $z$ is that the value of the parameter $w$ that minimizes the integrand can be significantly different for different values of $z$, so it is difficult to find a single $w$ that makes the integrand small for all $z \in \Gamma$. This is illustrated in \cref{fig:apriori-bound-factors-behavior--2d-plot-1,fig:apriori-bound-factors-behavior--2d-plot-2} with a simple example where the spectrum of the matrix~$A$ is contained in $[0.2, 4]$ and the rational Krylov subspace has four poles, for $z = -4$ and $z = -0.3$ and $w$ that varies in the complex square $[-5, 5] \times [-5i, 5i]$. Note that the poles factors in \cref{fig:apriori-bound-factors-behavior--2d-plot-1,fig:apriori-bound-factors-behavior--2d-plot-2} only differ by a constant multiplicative factor, and that the linear system error factors are exactly the same. However, due to the different behavior of the $h_{w, z}$ factor in the two cases, the full integrands (on the bottom right in \cref{fig:apriori-bound-factors-behavior--2d-plot-1,fig:apriori-bound-factors-behavior--2d-plot-2}) look significantly different.

The values of $\norm{h_{w, z}}_{[a, b]}$ in \cref{fig:apriori-bound-factors-behavior--2d-plot-1,fig:apriori-bound-factors-behavior--2d-plot-2} are computed using \cref{lemma:bound-on-hwz--complex-w-real-z}, which generalizes \cite[Lemma 3.1]{CGMM21} to the case of complex $w$ and real $z$. We mention that this result can also be generalized to the case of both $w$ and $z$ complex with a similar proof, but the statement is less elegant in that case. 
\begin{lemma}
	\label{lemma:bound-on-hwz--complex-w-real-z}
	Let $[a, b] \subset \R$, $z \in \R \setminus [a, b]$ and $w \in \C$. Define
	\begin{equation*}
		\lambda^\star = \frac{\abs{w}^2 - z \Real(w)}{\Real(w) - z}
	\end{equation*}
	and let
	\begin{equation*}
		h^\star = \begin{cases}
			\abs{h_{w, z}(\lambda^\star)} = \abs*{\dfrac{	\Imag(w)}{w-z}} & \text{if $\lambda^\star \in [a, b]$,} \\
			0 & \text{otherwise.}
		\end{cases}
	\end{equation*}
	We have
	\begin{equation*}
		\norm{h_{w, z}}_{[a, b]} = \max \left\{ \abs*{\frac{a-w}{a-z}}, \, \abs*{\frac{b-w}{b-z}}, \, h^\star \right\}.
	\end{equation*}
\end{lemma}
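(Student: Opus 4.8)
The plan is to reduce the computation of $\norm{h_{w,z}}_{[a,b]}$ to a one-variable optimization. Since $z \in \R \setminus [a,b]$, on the compact interval $[a,b]$ the function
\[
	\psi(t) := \abs{h_{w,z}(t)}^2 = \frac{(t - \Real(w))^2 + \Imag(w)^2}{(t - z)^2}
\]
is smooth, so $\norm{h_{w,z}}_{[a,b]}^2 = \max_{t \in [a,b]} \psi(t)$ is attained either at an endpoint $t \in \{a,b\}$ or at an interior critical point. A direct differentiation gives
\[
	\psi'(t) = \frac{2\big[(t - \Real(w))(\Real(w) - z) - \Imag(w)^2\big]}{(t - z)^3},
\]
so every interior critical point satisfies $(t - \Real(w))(\Real(w) - z) = \Imag(w)^2$. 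When $\Real(w) \ne z$ this linear equation has the unique root $t = \lambda^\star$, as one sees by solving for $t$ and simplifying $\Real(w) + \Imag(w)^2/(\Real(w) - z) = (\abs{w}^2 - z\Real(w))/(\Real(w) - z)$; and when $\Real(w) = z$ it has no root at all, except in the trivial subcase $w = z$, where $h_{w,z} \equiv 1$ and the claimed identity is immediate. Hence, away from the trivial case, the maximum of $\psi$ on $[a,b]$ is attained at $a$, at $b$, or at $\lambda^\star$ (and the last only when $\lambda^\star$ actually lies in $[a,b]$).

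It then remains to evaluate $\psi$ at $\lambda^\star$. Using $\lambda^\star - \Real(w) = \Imag(w)^2/(\Real(w) - z)$ together with
\[
	\lambda^\star - z = \frac{(\Real(w) - z)^2 + \Imag(w)^2}{\Real(w) - z} = \frac{\abs{w - z}^2}{\Real(w) - z},
\]
a short computation yields $\psi(\lambda^\star) = \Imag(w)^2/\abs{w-z}^2 = \abs{\Imag(w)/(w-z)}^2$, which is exactly $(h^\star)^2$ in the case $\lambda^\star \in [a,b]$. Taking square roots and assembling the pieces: on one hand each of $\abs{h_{w,z}(a)}$, $\abs{h_{w,z}(b)}$, and $h^\star$ (which is either $0$, or $\abs{h_{w,z}(\lambda^\star)}$ with $\lambda^\star \in [a,b]$) is at most $\norm{h_{w,z}}_{[a,b]}$, giving ``$\ge$'' for the right-hand side; on the other hand $\norm{h_{w,z}}_{[a,b]}^2 = \max_{[a,b]} \psi$ equals one of $\psi(a)$, $\psi(b)$, $\psi(\lambda^\star)$ (with $\lambda^\star \in [a,b]$), which gives the reverse inequality. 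Combining the two yields the stated formula.

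I do not foresee a genuine obstacle: this is essentially a routine calculus exercise. The only points needing mild care are the degenerate configurations $\Real(w) = z$ and $w = z$, handled as above; the algebraic simplifications of $\lambda^\star$ and of $\psi(\lambda^\star)$; and the bookkeeping that $a, b \in [a,b]$ and that $h^\star$ is by definition either $0$ or a value attained by $\abs{h_{w,z}}$ on $[a,b]$, so the right-hand maximum can never exceed $\norm{h_{w,z}}_{[a,b]}$. One may note in passing — though it is not needed for the proof — that the sign of $\psi'$ near $\lambda^\star$ shows that whenever $\lambda^\star \in [a,b]$ it is in fact a local minimum of $\abs{h_{w,z}}$, so the term $h^\star$ is always dominated by an endpoint value; it is kept in the statement only to make it uniform.
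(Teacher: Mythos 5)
Your proof is correct and follows essentially the same route as the paper's: square the modulus, differentiate, identify $\lambda^\star$ as the unique critical point, and evaluate $\abs{h_{w,z}(\lambda^\star)} = \abs{\Imag(w)/(w-z)}$. Your extra care with the degenerate cases $\Real(w)=z$ and $w=z$, and your closing observation that $\lambda^\star$ is in fact always a local \emph{minimum} on $[a,b]$ (so the $h^\star$ term is redundant), are correct refinements not present in the paper's proof.
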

\begin{proof}
	The proof is similar to the proof of \cite[Lemma~3.1]{CGMM21}. For any $\lambda \in \R$, we have
	\begin{equation*}
		H(\lambda) := \abs{h_{w, z}(\lambda)}^2 = \abs*{\frac{\lambda - w}{\lambda - z}}^2 = \frac{(\lambda - \Real(w))^2 + \Imag(w)^2}{(\lambda - z)^2},
	\end{equation*}
	and
	\begin{equation*}
		\frac{\dde H}{\dde \lambda} = \frac{2(\lambda - \Real(w))(\lambda - z)^2 - 2(\lambda - z)( (\lambda - \Real(w))^2 + \Imag(w)^2 )}{(\lambda - z)^4}.
	\end{equation*}
	It follows that $\frac{\dde H}{\dde \lambda} = 0$ only for $\lambda = \lambda^\star$, so the only possible local maxima of $h_{w, z}(\lambda)$ in $[a, b]$ are for $\lambda = a$, $\lambda = b$ and $\lambda = \lambda^\star$, if $\lambda^\star \in [a, b]$. Finally, with some simple algebraic manipulations it can be shown that $\abs{h_{w, z}(\lambda^\star)} = \abs{\Imag(w) / (w-z)}$.
\end{proof}

\begin{figure}[h]
	\centering
	\includegraphics[width=0.46\textwidth]{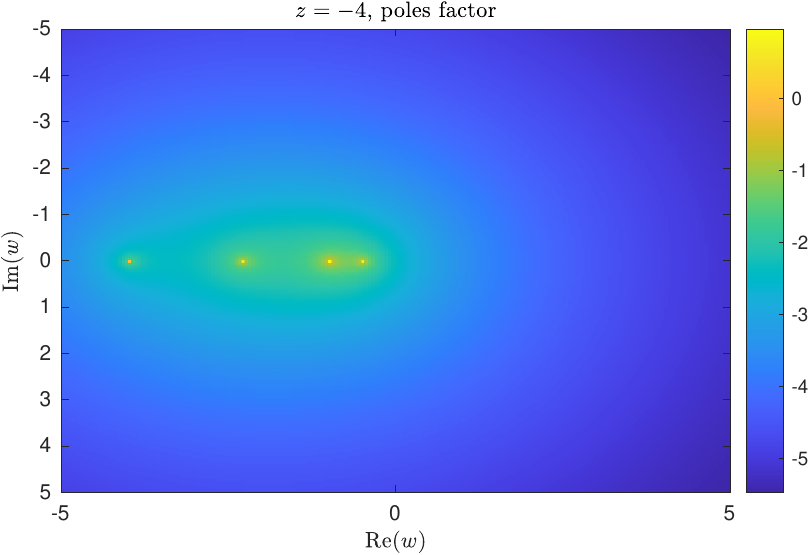}%
	\quad%
	\includegraphics[width=0.46\textwidth]{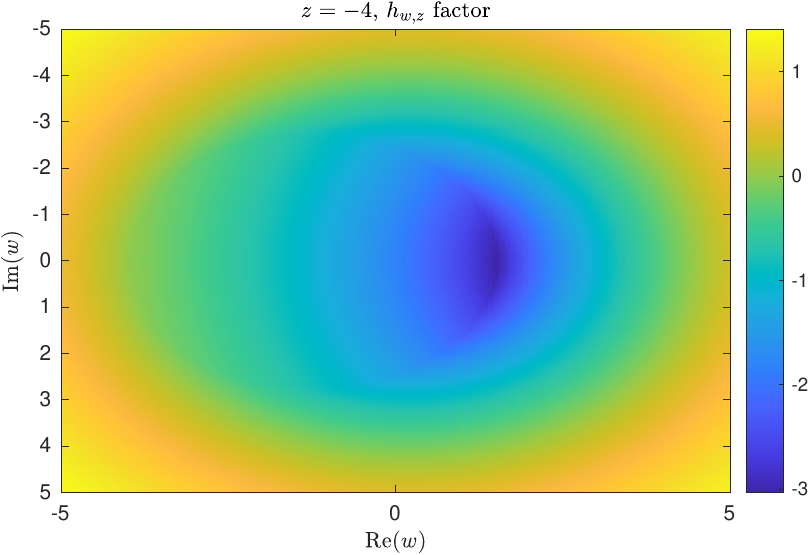}
	\includegraphics[width=0.46\textwidth]{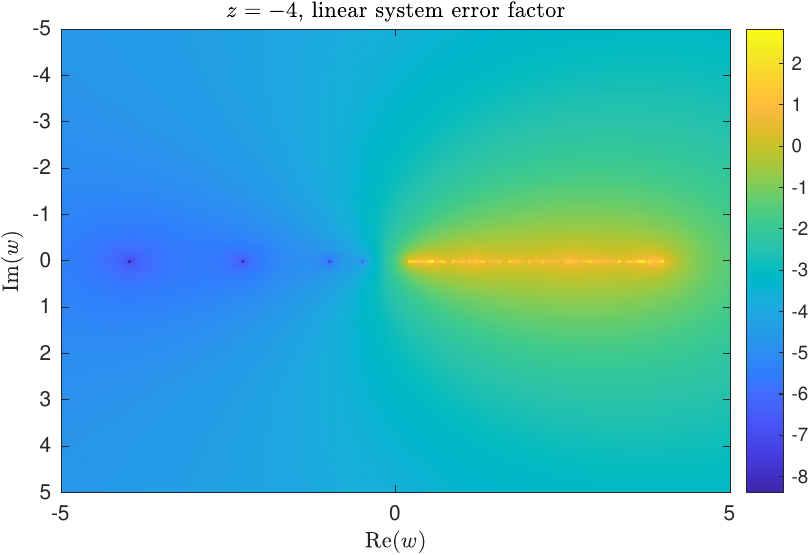}%
	\quad%
	\includegraphics[width=0.46\textwidth]{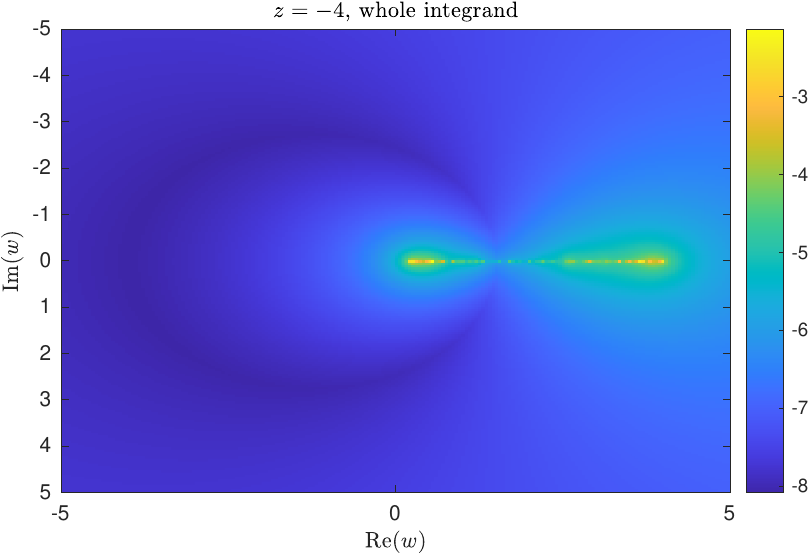}
	\caption{Behavior of the different factors in the first a priori bound in \cref{cor:integral-error-bound--a-priori} for $w \in [-5, 5] \times [-5 i, 5i]$, for $z = -4$.}
	\label{fig:apriori-bound-factors-behavior--2d-plot-1}
\end{figure}

\begin{figure}[h]
	\centering
	\includegraphics[width=0.46\textwidth]{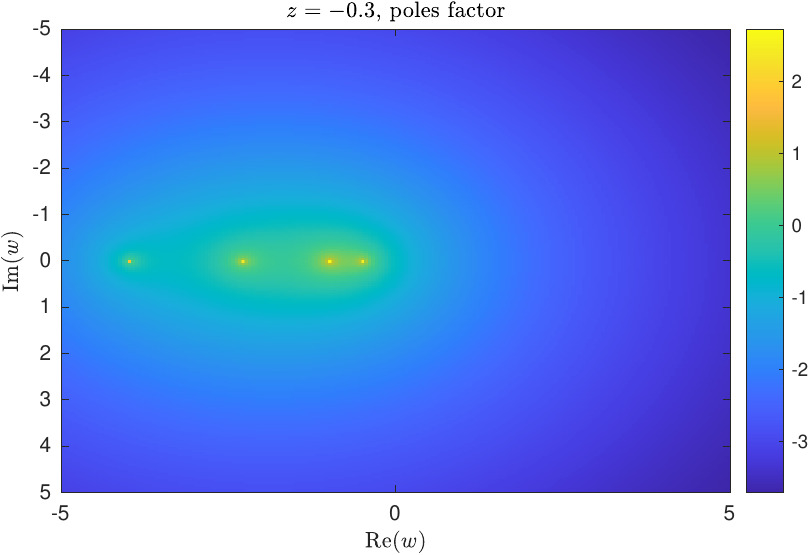}%
	\quad%
	\includegraphics[width=0.46\textwidth]{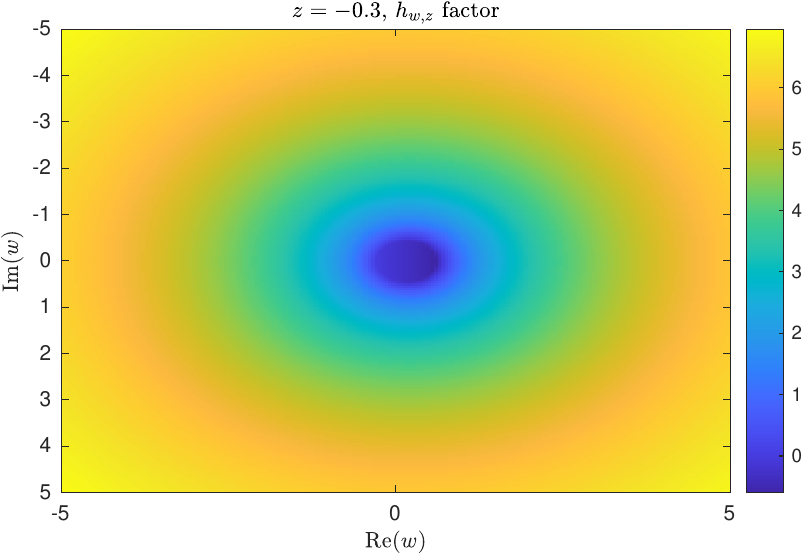}
	\includegraphics[width=0.46\textwidth]{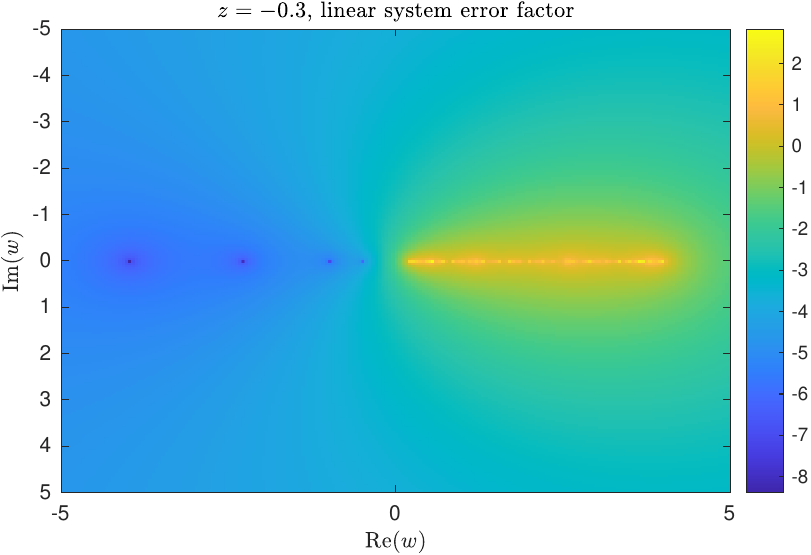}%
	\quad%
	\includegraphics[width=0.46\textwidth]{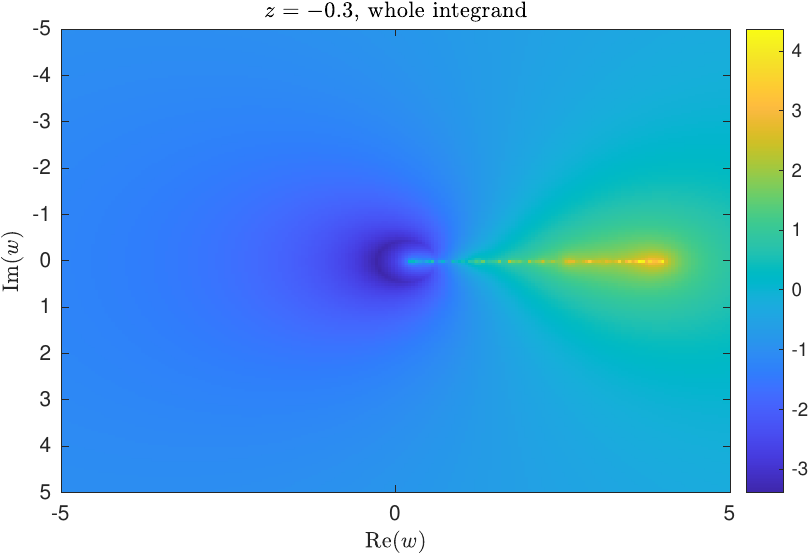}
	\caption{Behavior of the different factors in the first a priori bound in \cref{cor:integral-error-bound--a-priori} for $w \in [-5, 5] \times [-5 i, 5i]$, for $z = -0.3$.}
	\label{fig:apriori-bound-factors-behavior--2d-plot-2}
\end{figure}

We can see from \cref{fig:apriori-bound-factors-behavior--2d-plot-1,fig:apriori-bound-factors-behavior--2d-plot-2} that the values of $w$ for which the integrand is smallest are close to $z$. Indeed, it turns out that the bounds in \cref{cor:integral-error-bound-ratkrylov--general} are minimized precisely when $w(z)\equiv z$.

\begin{proposition}
	\label{prop:bounds-minimized-for-w-equal-to-z}
	Under the same assumptions of \cref{cor:integral-error-bound-ratkrylov--general}, the bounds in \cref{cor:integral-error-bound-ratkrylov--general} are minimized for $w(z) \equiv z$. 
\end{proposition}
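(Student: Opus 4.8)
The plan is to reduce the minimization over the \emph{function} $w$ to a pointwise minimization of the integrand at each $z\in\Gamma$. Both bounds in \cref{cor:integral-error-bound-ratkrylov--general} have the form $\int_\Gamma \Phi_z(w(z))\,\abs{\dde\mu(z)}$, where for each fixed $z$ the nonnegative quantity $\Phi_z(w)$ equals $\abs{q_{m-1}(z)/q_{m-1}(w)}\prod_{j=0}^m\norm{h_{w,z}}_{\mathcal S_j}\,\norm{\err_m(w)}_2$ for the first bound and $\abs{q_{m-1}(z)/q_{m-1}(w)}\prod_{j=1}^m\norm{h_{w,z}}_{\mathcal S_j}\,\norm{k_z}_{\mathcal S_0}\,\norm{\res_m(w)}_2$ for the second, and in both cases it depends on $z$ only through the value $w(z)$. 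It therefore suffices to show that $\Phi_z(w)\ge\Phi_z(z)$ for every $z\in\Gamma$ and every admissible $w\in\C\setminus\mathcal D$; integrating this pointwise inequality against $\abs{\dde\mu(z)}$ then shows that the admissible choice $w(z)\equiv z$ minimizes both bounds. Note that at $w=z$ we have $h_{z,z}\equiv 1$ and $q_{m-1}(z)/q_{m-1}(z)=1$, so $\Phi_z(z)=\norm{\err_m(z)}_2$ for the first bound and $\Phi_z(z)=\norm{k_z}_{\mathcal S_0}\norm{\res_m(z)}_2$ for the second; in particular the value taken at $w(z)\equiv z$ is exactly the elementary estimate obtained by applying the triangle inequality directly to \cref{eqn:basic-integral-error-ratkrylov--matvec}.

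For the bound in terms of $\err_m$ I would prove the pointwise inequality by starting from the identity \cref{eqn:linsys-error-relation-z-w}, namely $\err_m(z)=\frac{q_{m-1}(z)}{q_{m-1}(w)}\det\big(h_{w,z}(A_m)\big)\,h_{w,z}(A)\,\err_m(w)$, taking the Euclidean norm, and bounding the two matrix factors exactly as in the proof of \cref{cor:integral-error-bound-ratkrylov--general}: since $\lambda_j(A_m)\in\mathcal S_j$ one has $\abs{\det(h_{w,z}(A_m))}=\prod_{j=1}^m\abs{h_{w,z}(\lambda_j(A_m))}\le\prod_{j=1}^m\norm{h_{w,z}}_{\mathcal S_j}$, and since $A$ is symmetric with $\sigma(A)\subset\mathcal S_0$ one has $\norm{h_{w,z}(A)\err_m(w)}_2\le\norm{h_{w,z}(A)}_2\norm{\err_m(w)}_2\le\norm{h_{w,z}}_{\mathcal S_0}\norm{\err_m(w)}_2$. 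Together these give $\norm{\err_m(z)}_2\le\Phi_z(w)$, that is $\Phi_z(z)\le\Phi_z(w)$. For the bound in terms of $\res_m$ I would use instead the identity \cref{eqn:linsys-residual-relation-z-w}: since $\chi_m(t)=\det(tI-A_m)=\prod_{j=1}^m(t-\lambda_j(A_m))$, one has $\chi_m(w)/\chi_m(z)=\prod_{j=1}^m h_{w,z}(\lambda_j(A_m))=\det(h_{w,z}(A_m))$, so \cref{eqn:linsys-residual-relation-z-w} becomes $\res_m(z)=\frac{q_{m-1}(z)}{q_{m-1}(w)}\det\big(h_{w,z}(A_m)\big)\,\res_m(w)$; taking norms, bounding the determinant as above, and multiplying by $\norm{k_z}_{\mathcal S_0}$ yields $\norm{k_z}_{\mathcal S_0}\norm{\res_m(z)}_2\le\Phi_z(w)$, i.e.\ again $\Phi_z(z)\le\Phi_z(w)$.

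I do not expect a genuine obstacle: the proposition is essentially the observation that the chain of inequalities proving \cref{cor:integral-error-bound-ratkrylov--general} collapses to an equality — in fact to the plain triangle inequality — precisely when $w(z)=z$. The only small computations are the elementary identity $\chi_m(w)/\chi_m(z)=\det(h_{w,z}(A_m))$ used in the residual case and the observation that $h_{z,z}$ and $q_{m-1}(z)/q_{m-1}(w)$ both equal $1$ at $w=z$. If one wishes to be precise, one should also record that the minimizer need not be unique: for a given $z$, if each supremum $\norm{h_{w,z}}_{\mathcal S_j}$ happens to be attained at a point that is an eigenvalue of $A$ or of $A_m$ (and the corresponding linear system error or residual is suitably aligned), other values of $w(z)$ can realize the same value of the integrand.
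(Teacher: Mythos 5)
Your proposal is correct and follows essentially the same route as the paper: it evaluates the bound at $w(z)\equiv z$ (where $h_{z,z}\equiv 1$ and the $q_{m-1}$ ratio is $1$), then uses the identities \cref{eqn:linsys-error-relation-z-w} and \cref{eqn:linsys-residual-relation-z-w} together with the same determinant and supremum-norm estimates from the proof of \cref{cor:integral-error-bound-ratkrylov--general} to show the resulting quantity is a pointwise lower bound for the integrand at any admissible $w$. The only cosmetic difference is that you phrase this as a pointwise inequality $\Phi_z(z)\le\Phi_z(w)$ before integrating, whereas the paper integrates first; the substance is identical.
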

\begin{proof}
	Let us consider the first bound in \cref{cor:integral-error-bound-ratkrylov--general}. For $w(z) \equiv z$, we have $h_{z, z}(t) = 1$ for all $t \ne z$ , so the bound becomes simply
	\begin{equation*}
		\norm{f(A) \vec b - \f_m}_2 \le \int_\Gamma \norm{\err_m(z)}_2 \, \abs{\dde \mu(z)}.
	\end{equation*}  
	Now, consider any function $w(z)$ that satisfies the assumptions in \cref{thm:integral-error-ratkrylov--matvec} and recall~\cref{eqn:linsys-error-relation-z-w}, which implies that
	\begin{equation*}
		\norm{\err_m(z)}_2 = \abs*{ \frac{q_{m-1}(z)}{q_{m-1}(w(z))}} \cdot \abs{\det (h_{w(z), z}(A_m))} \cdot \norm{h_{w(z), z}(A) \err_m(w(z))}_2.
	\end{equation*}
	Using the assumptions of \cref{cor:integral-error-bound-ratkrylov--general}, we can bound
	\begin{equation*}
		\abs{\det (h_{w(z), z}(A_m))} \le \prod_{j = 1}^m \norm{h_{w(z), z}}_{\mathcal{S}_j}
	\end{equation*}
	and
	\begin{equation*}
		\norm{h_{w(z), z}(A) \err_m(w(z))}_2 \le \norm{h_{w(z), z}}_{\mathcal{S}_0} \cdot \norm{\err_m(w(z))}_2.
	\end{equation*}
	If we plug these inequalities in the expression for $\norm{\err_m(z)}_2$, we obtain
	\begin{equation*}
		\begin{aligned}
			\norm{f(A) \vec b - \f_m}_2 &\le \int_\Gamma \norm{\err_m(z)}_2 \, \abs{\dde \mu(z)} \\
			&\le \int_\Gamma \abs*{ \frac{q_{m-1}(z)}{q_{m-1}(w(z))}} \cdot \prod_{j = 1}^m \norm{h_{w(z), z}}_{\mathcal{S}_j} \cdot \norm{h_{w(z), z}}_{\mathcal{S}_0} \cdot \norm{\err_m(w(z))}_2 \, \abs{\dde \mu(z)},
		\end{aligned}	
	\end{equation*}
	which coincides with the first bound in \cref{cor:integral-error-bound-ratkrylov--general}. Since this holds for any admissible function $w(z)$, the function $w(z) = z$ must be a minimizer for the right-hand side of the bound. 

	The same can be proved for the second bound in \cref{cor:integral-error-bound-ratkrylov--general}, by observing that for~$w(z) = z$ the bound simplifies to
	\begin{equation*}
		\norm{f(A) \vec b - \f_m}_2 \le \int_\Gamma \norm{k_z}_{\mathcal{S}_0} \cdot \norm{\res(z)}_2 \, \abs{\dde \mu(z)}
	\end{equation*} 
	and then using the same strategy, recalling the identity
	\begin{equation*}
		\norm{\res(z)}_2 = \abs*{ \frac{q_{m-1}(z)}{q_{m-1}(w(z))}} \cdot \abs{\det (h_{w(z), z}(A_m))} \cdot \norm{\res(w(z))}_2.
	\end{equation*}%
\end{proof}

Although using $w(z)\equiv z$ is not very practical because it would require the computation of the residual or error norm of a different shifted linear system for each point used in the discretization of the integral, the result of \cref{prop:bounds-minimized-for-w-equal-to-z} provides some theoretical insight regarding the choice of the function $w(z)$. 
In the following subsections we discuss practical ways to choose the function $w(z)$ in order to approximately minimize the bounds, both in the a priori and a posteriori setting.

\subsection{A priori bounds}
\label{subsec:a-priori-bounds}

Let us consider the first a priori bound in \cref{cor:integral-error-bound--a-priori}. For any~$z \in \Gamma$, the ideal choice for $w(z)$ would be one that minimizes the integrand in the bound, i.e. 
\begin{equation*}
	w(z) = \argmin_{w \in \C \setminus \mathcal{D}} \left( \frac{1}{\abs{q_{m-1}(w)}} \cdot \norm{h_{w, z}}_{[a, b]}^{m+1} \cdot \norm{\err_m(w)}_2 \right),
\end{equation*}
where we have dropped the factors that do not depend on $w$, and the set $\mathcal{D}$ is the one defined in \cref{thm:integral-error-ratkrylov--matvec}.
We have seen in \cref{prop:bounds-minimized-for-w-equal-to-z} that the bound is minimized by taking $w(z) = z$. However, in practice we do not have access to $\norm{\err_m(w)}_2$ a priori, so we must instead rely on an a priori bound for the linear system error of the form $\norm{\err_m(w)}_2 \le \varphi_m(w)$, and then select
\begin{equation}
	\label{eqn:choice-of-w--a-priori}
	w(z) = \argmin_{w \in \C \setminus \mathcal{D}} \left( \frac{1}{\abs{q_{m-1}(w)}} \cdot \norm{h_{w, z}}_{[a, b]}^{m+1} \cdot \varphi_m(w) \right).
\end{equation} 
In general, the choice $w(z) = z$ is not going to be the minimizer of \cref{eqn:choice-of-w--a-priori}, but we may heuristically expect that choosing $w(z)$ near $z$ gets us close to the optimum. The optimization problem can be approximately solved numerically by replacing $\C$ with a finite set $\mathcal{W}$ of candidates for $w$, and by finding for each $z$ the value of $w \in \mathcal{W}$ that minimizes the function on the right-hand side. 

Note that in a practical implementation of the bounds, the integral will be approximately evaluated using a quadrature formula, so the number of values of $z \in \Gamma$ for which we have to find the best $w \in \mathcal{W}$ will be finite.
For each quadrature node $z$, the minimization over the set~$\mathcal{W}$ requires the computation of $\norm{h_{w, z}}_{[a, b]}$ for all $w \in \mathcal{W}$, which can be done efficiently using \cite[Lemma~3.1]{CGMM21} if $w$ is real, or with \cref{lemma:bound-on-hwz--complex-w-real-z} if $z$ is real. Additionally, for each $w \in \mathcal{W}$ we have to compute $q_{m-1}(w)$ and $\varphi_m(w)$. 
Although these computations may significantly increase the cost of quadrature if the cardinality of $\mathcal{W}$ is large, it is important to note that their cost is independent of the matrix dimension $n$, and therefore it eventually becomes negligible compared to the cost of constructing the Krylov basis as the size of $A$ increases. 

A possible choice for the set $\mathcal{W}$ is a discretization of the integration contour $\Gamma$, which ensures that for each $z \in \Gamma$ there exists a $w \in \mathcal{W}$ that is quite close. Alternatively, one could decide to use a different set $\mathcal{W}_z$ for each $z \in \Gamma$, for instance by taking a small number of points in the neighborhood of $z$, or even a single point, such as the $w \in \mathcal{W}$ that is closest to $z$. However, this kind of choice heavily relies on the heuristic that the optimum of \cref{eqn:choice-of-w--a-priori} is attained for $w$ close to $z$, which is not always true. For example, this heuristic may fail when the upper bound $\norm{\err_m(w)}_2 \le \varphi_m(w)$ significantly overestimates the error norm only for certain values of~$w$.

\subsection{A posteriori bounds}
\label{subsec:a-posteriori-bounds}
Let us consider now the a posteriori bounds. We focus on the second bound in \cref{cor:integral-error-bound--a-posteriori}, since the formulation with the linear system residual is the most likely to be useful in practice. In this case, the best choice for the function $w(z)$ would be one that minimizes the integrand in the bound, i.e.
\begin{equation*}
	w(z) = \argmin_{w \in \C \setminus \mathcal{D}} \frac{\abs{\chi_m(w)}}{\abs{q_{m-1}(w)}} \cdot \norm{\res_m(w)}_2,
\end{equation*} 
where again we have only included the terms that depend on $w$. 
However, it follows from~\cref{eqn:linsys-residual-relation-z-w} that the quantity $\res_m(w) \cdot \chi_m(w) / q_{m-1}(w)$ is constant in $w$, so any value of~$w$ can be used to evaluate the bound and would give the same result. The most efficient choice is then to use a single $w$ for all $z \in \Gamma$. Note that with the choice $w(z) = z$, the bound can be rewritten in the simple form 
\begin{equation*}
	\norm{f(A) \vec b - \f_m} \le \int_\Gamma \norm{(A - zI)^{-1}}_2 \cdot \norm{\res_m(z)}_2 \, \abs{\de \mu(z)},
\end{equation*}
which can be easily obtained directly from \cref{eqn:basic-integral-error-ratkrylov--matvec}. The advantage of the formulation with $w(z) \equiv w$ is that the corresponding bound can be evaluated by computing the residual of a single linear system.

The evaluation of the a posteriori bound after $m$ Krylov iterations requires the computation of the residual
\begin{equation*}
	\res_m(w) = (A - wI) \sol_m(w) = (A - wI)V_m (A_m - wI)^{-1} \vec e_1 \norm{\vec b}_2.
\end{equation*}
Note that it is likely that an eigendecomposition of $A_m$ has been already computed for the evaluation of $f(A_m)$, so the $m \times m$ linear system $(A_m - wI)^{-1} \vec e_1$ can be solved with just $O(m^2)$ cost, with no need to compute a factorization of the matrix $A_m - w I$. So we can compute $\sol_m(w)$ for $O(mn)$ cost, and we can obtain $\res_m(w)$ with one additional matrix-vector multiplication with $A$. 
Alternatively, if we can keep in memory the matrix $A V_m$ (whose columns are often computed anyway in the rational Arnoldi algorithm), we can entirely avoid additional matrix-vector multiplications with $A$ and cheaply compute $\res_m(w)$ for a cost of $O(mn)$.  

\section{An a priori bound for the linear system residual}
\label{sec:linear-system-bounds}

In this section we present an a priori bound on the residual of a shifted linear system, which can be used to bound $\norm{\res_m(w(z))}_2$ and $\norm{\err_m(w(z))}_2$ in~\cref{cor:integral-error-bound--a-priori}. 

Consider the shifted linear system
\begin{equation*}
	(A - w I) \vec x = \vec b, \qquad w \in \C.
\end{equation*}
This linear system coincides with a Sylvester equation in which one of the coefficient matrices is $1 \times 1$, 
\begin{equation*}
	A X - X B = \vec b \vec c^T,
\end{equation*}
where $X = \vec x \in \R^{n \times 1}$, $B = w \in \R^{1 \times 1}$ and $\vec c = 1 \in \R$. 
As a consequence, the residual norm of the shifted linear system can be bounded by using the bounds developed in \cite{Beckermann11} for the residual of Sylvester equations.
The theorem that follows is obtained by specializing~\cite[Theorem~2.3]{Beckermann11} to the case of a shifted linear system with a symmetric positive definite matrix~$A$. 
The notation in the statement of the theorem is changed from \cite{Beckermann11} in order to match the notation used in the rest of the paper.

\begin{theorem} \cite[Theorem~2.3]{Beckermann11}
	\label{thm:apriori-linsys-bound--residual-sylvester}
	Assume that $A$ is symmetric with spectrum contained in $[\lambda_\text{min}, \lambda_\text{max}]$, and let the set of poles $\{ \xi_1, \dots, \xi_{m-1} \}$ be closed under complex conjugation. For any $w \in \R \setminus[\lambda_{\text{min}}, \lambda_{\text{max}}] \cup \{\xi_1, \dots, \xi_{m-1}\}$, we have
	\begin{equation*}
		\norm{\res_m(w)}_2 \le \norm{\vec b}_2 \, (4 + c) \, \gamma_m,
	\end{equation*}
	where
	\begin{equation*}
		c = 2 \sqrt{2} \sqrt{\kappa_2(A - wI)}
	\end{equation*}
	and
	\begin{equation*}
		\gamma_m = \left|\frac{\varphi(w) - 1}{\varphi(w) + 1} \right| \cdot \prod_{j = 1}^{m-1} \left|\frac{\varphi(w)/\varphi(\xi_j) - 1}{\varphi(w)/\varphi(\conj{\xi_j}) + 1} \right|, \qquad \varphi(z) = \sqrt {\dfrac{ z - \lambda_{\text{max}}}{z - \lambda_{\text{min}}}} \, .
	\end{equation*}
\end{theorem}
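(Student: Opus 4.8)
The plan is to obtain the inequality as a direct specialization of \cite[Theorem~2.3]{Beckermann11}. As already noted before the statement, the shifted system $(A - wI)\vec x = \vec b$ is the Sylvester equation $AX - XB = \vec b\vec c^T$ with $X = \vec x \in \R^{n\times 1}$, $B = w \in \R^{1\times 1}$, $\vec c = 1$, so the first step is to verify that the rational Krylov approximation $\sol_m(w)$ of \cref{eqn:shifted-linsys-solution-ratkryl} coincides with the rational Galerkin approximation that \cite{Beckermann11} analyses for this equation. Writing $\sol_m(w) = V_m Y_m$ with $V_m$ an orthonormal basis of $\rat_m(A,\vec b)$, the Galerkin condition $V_m^T(A\sol_m(w) - \sol_m(w)B - \vec b\vec c^T) = 0$ becomes $(A_m - wI)Y_m = \vec e_1\norm{\vec b}_2$, which is exactly the definition in \cref{eqn:shifted-linsys-solution-ratkryl}; equivalently, it is the statement $\res_m(w)\perp V_m$, which holds by construction (the same fact used for FOM in \cref{subsec:linear-systems-fom}). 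Since $\rat_m(A,\vec b)$ uses the denominator $q_{m-1}$ with finite poles $\{\xi_1,\dots,\xi_{m-1}\}$ (plus the pole at $\infty$ corresponding to the normalization $\xi_m=\infty$ that makes $K_m$ invertible), and since $\{\xi_j\}$ is conjugation-symmetric so the rational Arnoldi decomposition may be taken real, all hypotheses of \cite[Theorem~2.3]{Beckermann11} hold once $\sigma(A)\subset[\lambda_{\text{min}},\lambda_{\text{max}}]$ and $w\notin[\lambda_{\text{min}},\lambda_{\text{max}}]\cup\{\xi_1,\dots,\xi_{m-1}\}$.

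The second step is to simplify the general bound of \cite[Theorem~2.3]{Beckermann11}, which has the shape (a geometric constant) $\times$ (a Zolotarev/Blaschke-type quantity attached to the two spectra and the poles). Here the spectrum of $B$ is the single point $w$, so this quantity collapses to the modulus at $w$ of a normalized rational function whose poles are the $\xi_j$ together with one at $\infty$, and whose zeros lie on $[\lambda_{\text{min}},\lambda_{\text{max}}]$. Writing the conformal map of the complement of $[\lambda_{\text{min}},\lambda_{\text{max}}]$ onto the exterior of the unit disk explicitly as $\varphi(z)=\sqrt{(z-\lambda_{\text{max}})/(z-\lambda_{\text{min}})}$ turns this value into precisely $\gamma_m$: the pole at $\infty$ yields the leading factor of $\gamma_m$ and each finite pole $\xi_j$ a Blaschke-type factor, the conjugates $\conj{\xi_j}$ appearing in the mirror terms exactly because the pole set is conjugation-symmetric. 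For the geometric constant, symmetry of $A$ makes its eigenvector matrix orthogonal (spectral condition number $1$) and $B$ is trivially diagonalizable, so the only geometric contribution that survives is the one attached to $A - wI$; this reduces Beckermann's general prefactor to $4 + c$ with $c = 2\sqrt{2}\,\sqrt{\kappa_2(A - wI)}$. Combining the two pieces gives $\norm{\res_m(w)}_2 \le \norm{\vec b}_2\,(4+c)\,\gamma_m$.

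The main obstacle is bookkeeping rather than analysis: \cite{Beckermann11} is written for general low-rank right-hand sides and general coefficient matrices $B$, so one must track carefully how each quantity degenerates in the rank-one, $1\times 1$ case, and in particular confirm that the branch of the square root in $\varphi$ is chosen consistently with Beckermann's conformal map, so that $\abs{\varphi(z)}>1$ off the spectral interval and each Blaschke factor has modulus at most $1$. Once this dictionary between the two settings is fixed, the statement follows by substitution, with no new analytic input.
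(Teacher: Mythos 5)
Your proposal is correct and follows essentially the same route as the paper: both proofs specialize \cite[Theorem~2.3]{Beckermann11} to the rank-one Sylvester equation with $B = w \in \R^{1\times 1}$, identify the pole set as $\{\infty,\xi_1,\dots,\xi_{m-1}\}$ so that the pole at infinity produces the leading factor of $\gamma_m$ and each finite pole a Blaschke-type factor, and reduce the prefactor to $4+c$ with $c = 2\sqrt{2}\sqrt{\kappa_2(A-wI)}$. The paper is slightly more explicit on one bookkeeping point you gloss over --- it verifies that the $B$-side contribution $\gamma_{B,A} = \max_z u_{B,n}(z)$ vanishes identically because $W(B)=\{w\}$, so that only $\gamma_{A,B} = u_{A,m}(w) = \gamma_m$ survives in Beckermann's sum $\gamma_{A,B}+\gamma_{B,A}$ --- but this is exactly the kind of degeneration you flag, and your argument is otherwise the same.
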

\begin{proof}
	We show that this theorem follows by applying the second part of \cite[Theorem~2.3]{Beckermann11} to the linear system $(A - w I) \vec x = \vec b$, interpreted as a Sylvester equation. In this proof we use some notation from \cite{Beckermann11}.

	Note that in our setting we have $B = w \in \R$, so the size of the Krylov subspace associated to $B$ is $n = 1$, and the corresponding pole is $z_{B, 1} = \infty$. On the other hand, the poles $\{ z_{A, 1}, \dots, z_{A, m} \}$ used for the rational Krylov subspace associated to $A$ are given by $\{\infty, \xi_1, \dots, \xi_{m-1}\}$, where the pole at infinity appears because of the different notation used in the definition of rational Krylov subspaces in \cite{Beckermann11}.

	The numerical range of $B$ is $W(B) = \{ w \}$, so we have $u_{B, n} \equiv 0$ and
	\begin{equation*}
		\gamma_{B, A} = \max_{z \in [\lambda_\text{min}, \lambda_\text{max}]} u_{B, n}(z) = 0.
	\end{equation*} 
	We have 
	\begin{equation*}
		c_3 = 2\sqrt{2} \sqrt{\frac{\max\{\abs{\lambda_{\text{min}} - w}, \abs{\lambda_{\text{max}} - w}\}}{\min\{ \abs{\lambda_{\text{min}} - w}, \abs{\lambda_{\text{max}} - w} \}}} = 2 \sqrt{2} \sqrt{\kappa_2(A - wI)} =: c,
	\end{equation*}
	and 
	\begin{equation*}
		\gamma_{A, B} = u_{A, m}(w) = \left|\frac{\varphi(w) - 1}{\varphi(w) + 1} \right| \cdot \prod_{j = 1}^{m-1} \left|\frac{\varphi(w)/\varphi(\xi_j) - 1}{\varphi(w)/\varphi(\conj{\xi_j}) + 1} \right| =: \gamma_m, \qquad \varphi(z) := \sqrt {\dfrac{ z - \lambda_{\text{max}}}{z - \lambda_{\text{min}}}} \, ,
	\end{equation*}
	where the first factor in $\gamma_m$ corresponds to the pole at infinity.

	Observing that the Sylvester equation residual $\norm{S_{A, B}(X - X^G_{m, n})}_F$ from \cite{Beckermann11} corresponds to the shifted linear system residual $\norm{\res_m(w)}_2$ in our setting, by \cite[Theorem~2.3]{Beckermann11} we can conclude that
	\begin{align*}
		\norm{\res_m(w)}_2 &\le \norm{\vec b}_2 \, (4 \, \max\{ \gamma_{A, B} + \gamma_{B, A} \} + c_3 \, ( \gamma_{A, B} + \gamma_{B, A} )) \\
		&= \norm{\vec b}_2 \, (4 + c_3) \, \gamma_m.
	\end{align*} 
\end{proof}

\begin{remark}
	\label{rem:linsys-bound--non-positive-definite-case}
	The original statement of \cite[Theorem~2.3]{Beckermann11} holds also for nonsymmetric matrices, by using the numerical ranges of $A$ and $B$ and the associated Green functions. The main assumption on the matrices $A$ and $B$ is that $W(A) \cap W(B) = \emptyset$, which in our setting corresponds to $w \notin W(A)$. Here we prefer to consider only the symmetric case, since it is difficult to compute the numerical range of a general matrix in practice.
\end{remark}
\begin{remark}
	\label{rem:linsys-bound-from-residual-to-error}
	The bound on the residual norm $\norm{\res_m(w)}_2$ also provides a bound for the error norm $\norm{\err_m(w)}_2$ via the inequality
	\begin{equation}
		\label{eqn:linsys-bound--residual-to-error}
		\begin{aligned}
			\norm{\err_m(w)}_2 &\le \norm{(A - w I)^{-1}}_2 \cdot \norm{\res_m(w)}_2 \\
			&\le \max\{\abs{\lambda_{\text{min}} - w}^{-1}, \abs{\lambda_{\text{max}} - w}^{-1} \} \cdot \norm{\res_m(w)}_2.
		\end{aligned}
	\end{equation}
	The right-hand side of the inequality \cref{eqn:linsys-bound--residual-to-error} can be used in \cref{eqn:choice-of-w--a-priori} as the function $\varphi_m(w)$.
\end{remark}

\section{Error bounds for quadratic forms}
\label{sec:quadratic-forms}

In this section we adapt the statement of \cref{thm:integral-error-ratkrylov--matvec} to the case of quadratic forms with $f(A)$, and we show that it leads to improved bounds when $A$ is symmetric. 
Let us denote by $\qf_m = \vec b^T \f_m$ the approximation to $\vec b^T f(A) \vec b$ obtained after $m$ steps of a rational Krylov method. From \cref{eqn:basic-integral-error-ratkrylov--matvec} we immediately have the identity
\begin{equation*}
	\vec b^T f(A) \vec b - \qf_m = \int_\Gamma \vec b^T \err_m(z) \de \mu(z).
\end{equation*}
We can write $\vec b = (A - z I) \sol_m(z) + \res_m(z)$, and using the symmetry of $A$ we have
\begin{align*}
	\vec b^T \err_m(z) &= \sol_m(z)^T (A - z I) \err_m(z) + \res_m(z)^T \err_m(z) \\
	&= \sol_m(z)^T \res_m(z) + \res_m(z)^T \err_m(z) \\
	&= \res_m(z)^T \err_m(z),
\end{align*}
where in the last equality we used the fact that $\res_m(z) \perp \rat_m(A, \vec b)$, so $\sol_m(z)^T\res_m(z) = 0$; see \cref{eqn:shifted-residuals-collinearity}.
The error for the approximation of $\vec b^T f(A) \vec b$ is therefore given by
\begin{equation}
	\label{eqn:basic-integral-error-ratkrylov--quadform}
	\vec b^T f(A) \vec b - \qf_m = \int_\Gamma \res_m(z)^T \err_m(z) \de \mu(z).
\end{equation}
Combining this error expression with \cref{eqn:linsys-residual-relation-z-w} leads to the following theorem, which is the equivalent of \cref{thm:integral-error-ratkrylov--matvec} for quadratic forms.

\begin{theorem}
	\label{thm:integral-error-ratkrylov--quadform}
	Assume that $A$ is symmetric and that $f$ has the integral representation \cref{eqn:integral-representation-f}, and denote by $\qf_m$ the approximation to $\vec b^T f(A) \vec b$ after $m$ iterations of a rational Krylov method with denominator polynomial $q_{m-1}$. Let $\mathcal{D} = \{\xi_1, \dots, \xi_{m-1} \} \cup \sigma(A) \cup \sigma(A_m)$ and let $w$ be a function $w : \Gamma \to \C \setminus \mathcal{D}$. We have
	\begin{equation*}
		\begin{aligned}
			\vec b^T f(A) \vec b - \qf_m &= \int_\Gamma \bigg( \frac{q_{m-1}(z)}{q_{m-1}(w(z))} \det(h_{w(z), z}(A_m)) \bigg)^2 \res_m(w(z))^T k_z(A) \res_m(w(z)) \de \mu(z) \\
			&= \int_\Gamma \bigg( \frac{q_{m-1}(z)}{q_{m-1}(w(z))} \det(h_{w(z), z}(A_m)) \bigg)^2 \res_m(w(z))^T h_{w(z), z}(A) \err_m(w(z)) \de \mu(z).
		\end{aligned}
	\end{equation*}
\end{theorem}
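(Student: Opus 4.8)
The plan is to mirror the derivation of \cref{thm:integral-error-ratkrylov--matvec}, but starting from the quadratic-form error identity \cref{eqn:basic-integral-error-ratkrylov--quadform}, namely $\vec b^T f(A) \vec b - \qf_m = \int_\Gamma \res_m(z)^T \err_m(z) \de \mu(z)$, and rewriting the scalar integrand $\res_m(z)^T \err_m(z)$ in terms of quantities evaluated only at $w(z)$.

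First I would observe that the scalar prefactor in \cref{eqn:linsys-residual-relation-z-w} can be recast using the determinant factor that appears in \cref{eqn:linsys-error-relation-z-w}. Since $h_{w, z}(t) = (t - w)/(t - z)$ we have $\det(h_{w, z}(A_m)) = \det(A_m - wI)/\det(A_m - zI) = \chi_m(w)/\chi_m(z)$, where the last equality uses $\chi_m(x) = \det(xI - A_m)$ and cancels the two factors of $(-1)^m$. Hence \cref{eqn:linsys-residual-relation-z-w} can be written as $\res_m(z) = \tfrac{q_{m-1}(z)}{q_{m-1}(w)} \det(h_{w, z}(A_m)) \, \res_m(w)$, which carries exactly the same scalar prefactor as the error relation $\err_m(z) = \tfrac{q_{m-1}(z)}{q_{m-1}(w)} \det(h_{w, z}(A_m)) \, h_{w, z}(A) \err_m(w)$.

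Next, for each $z \in \Gamma$ I would substitute these two expressions, with $w = w(z)$, into $\res_m(z)^T \err_m(z)$. The scalar prefactors coincide and are unaffected by transposition, so they combine into a single squared factor $\big( \tfrac{q_{m-1}(z)}{q_{m-1}(w(z))} \det(h_{w(z), z}(A_m)) \big)^2$ that can be pulled out of the inner product, leaving $\res_m(w(z))^T h_{w(z), z}(A) \err_m(w(z))$. Plugging this back into \cref{eqn:basic-integral-error-ratkrylov--quadform} gives the second identity in the statement. For the first identity I would then use $\err_m(w) = (A - wI)^{-1} \res_m(w) = k_w(A) \res_m(w)$ together with the functional identity $h_{w, z}(A)(A - wI)^{-1} = (A - zI)^{-1} = k_z(A)$, valid on $\sigma(A)$ since $w(z), z \notin \mathcal{D}$, so that $\res_m(w)^T h_{w, z}(A) \err_m(w) = \res_m(w)^T k_z(A) \res_m(w)$.

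The argument is essentially bookkeeping; there is no serious obstacle. The one point worth stating carefully is the identity $\chi_m(w)/\chi_m(z) = \det(h_{w, z}(A_m))$, which makes the scalar prefactor symmetric between the residual relation and the error relation and is what produces the square; and it should be noted that the symmetry of $A$ has already been used in establishing \cref{eqn:basic-integral-error-ratkrylov--quadform} (it turns $\vec b^T \err_m(z)$ into $\res_m(z)^T \err_m(z)$), which is the structural feature distinguishing the quadratic-form case from \cref{thm:integral-error-ratkrylov--matvec}.
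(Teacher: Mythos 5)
Your proposal is correct and follows essentially the same route as the paper: the paper's proof substitutes $\err_m(z) = (A-zI)^{-1}\res_m(z)$ into \cref{eqn:basic-integral-error-ratkrylov--quadform} and then replaces both copies of $\res_m(z)$ via \cref{eqn:linsys-residual-relation-z-w}, which is the same bookkeeping you perform (merely arriving at the two displayed identities in the opposite order), and your observation that $\chi_m(w)/\chi_m(z) = \det(h_{w,z}(A_m))$ is exactly the identification the paper already uses implicitly in \cref{eqn:linsys-error-relation-z-w}.
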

\begin{proof}
	In \cref{eqn:basic-integral-error-ratkrylov--quadform}, use $\err_m(z) = (A - z I)^{-1} \res_m(z)$ and use \cref{eqn:linsys-residual-relation-z-w} to replace $\res_m(z)$ with $\res_m(w)$.
\end{proof}

The following corollary can be derived from the two error expressions in \cref{thm:integral-error-ratkrylov--quadform} with a proof similar to the one of \cref{cor:integral-error-bound-ratkrylov--general}.

\begin{corollary}
	\label{cor:integral-error-bound-ratkrylov--general-quadform}
	With the same hypothesis as in \cref{thm:integral-error-ratkrylov--quadform}, assume that for some $\mathcal{S}_0, \,\mathcal{S}_1, \dots, \mathcal{S}_m \subset \C$ we have $\sigma(A) \subset \mathcal{S}_0$, and $\lambda_i(A_m) \in \mathcal{S}_i$ for all $i = 1, \dots, m$. Then we have
	\begin{equation*}
		\norm{\vec b^T f(A) \vec b - \qf_m}_2 \le \int_\Gamma \abs*{ \frac{q_{m-1}(z)}{q_{m-1}(w(z))}}^2 \prod_{j = 1}^m \norm{h_{w(z), z}}^2_{\mathcal{S}_j} \cdot \norm{k_z}_{\mathcal{S}_0} \cdot \norm{\res_m(w(z))}^2_2 \, \abs{\dde \mu(z)}
	\end{equation*}
	and similarly
	\begin{equation*}
		\norm{\vec b^T f(A) \vec b - \qf_m}_2 \le \int_\Gamma G_m(z) \, \abs{\dde \mu(z)},
	\end{equation*}
	where 
	\begin{equation*}
		G_m(z) = \abs*{ \frac{q_{m-1}(z)}{q_{m-1}(w(z))}}^2 \prod_{j = 1}^m \norm{h_{w(z), z}}^2_{\mathcal{S}_j} \cdot \norm{h_{w(z), z}}_{\mathcal{S}_0} \cdot \norm{\res_m(w(z))}_2 \cdot \norm{\err_m(w(z))}_2.
	\end{equation*}
\end{corollary}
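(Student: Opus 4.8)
The plan is to proceed exactly as in the proof of \cref{cor:integral-error-bound-ratkrylov--general}, applying elementary termwise estimates to the two integral identities of \cref{thm:integral-error-ratkrylov--quadform} and then using $\abs{\int_\Gamma g \de\mu} \le \int_\Gamma \abs{g}\,\abs{\dde\mu}$. There are three factors to bound under the integral: the power of $q_{m-1}(z)/q_{m-1}(w(z))$, which is left untouched; the squared determinant $\det(h_{w(z),z}(A_m))^2$, which is common to both identities; and the scalar bilinear form in $\res_m(w(z))$ (and, in the second identity, $\err_m(w(z))$).

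For the determinant factor I would argue as in \cref{cor:integral-error-bound-ratkrylov--general}: since $\lambda_j(A_m) \in \mathcal{S}_j$ by hypothesis,
\begin{equation*}
	\abs{\det(h_{w(z),z}(A_m))} = \prod_{j=1}^m \abs{h_{w(z),z}(\lambda_j(A_m))} \le \prod_{j=1}^m \norm{h_{w(z),z}}_{\mathcal{S}_j},
\end{equation*}
and squaring produces the factor $\prod_{j=1}^m \norm{h_{w(z),z}}_{\mathcal{S}_j}^2$ appearing in both stated bounds.

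Next I would estimate the scalar factors. Since $A$ is symmetric, the matrices $k_z(A) = (A-zI)^{-1}$ and $h_{w(z),z}(A) = (A - w(z)I)(A - zI)^{-1}$ are normal, so their spectral norms equal their spectral radii, giving $\norm{k_z(A)}_2 = \max_{\lambda \in \sigma(A)} \abs{\lambda - z}^{-1} \le \norm{k_z}_{\mathcal{S}_0}$ and $\norm{h_{w(z),z}(A)}_2 \le \norm{h_{w(z),z}}_{\mathcal{S}_0}$ because $\sigma(A) \subset \mathcal{S}_0$. Combining this with the Cauchy--Schwarz-type estimate $\abs{\vec u^T M \vec v} \le \norm{M}_2 \norm{\vec u}_2 \norm{\vec v}_2$ yields
\begin{equation*}
	\abs{\res_m(w(z))^T k_z(A)\res_m(w(z))} \le \norm{k_z}_{\mathcal{S}_0}\,\norm{\res_m(w(z))}_2^2
\end{equation*}
for the first identity, and
\begin{equation*}
	\abs{\res_m(w(z))^T h_{w(z),z}(A)\err_m(w(z))} \le \norm{h_{w(z),z}}_{\mathcal{S}_0}\,\norm{\res_m(w(z))}_2\,\norm{\err_m(w(z))}_2
\end{equation*}
for the second. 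Inserting the determinant estimate and these scalar estimates into the two identities of \cref{thm:integral-error-ratkrylov--quadform}, taking absolute values inside the integral, and applying the triangle inequality for integrals gives the first inequality and, with $G_m(z)$ as defined, the second.

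The argument is routine, so there is no genuine obstacle; the only point requiring a little care is that $w(z)$ — and hence $\res_m(w(z))$, $\err_m(w(z))$ — may be complex, so the scalar quantities $\res_m(w(z))^T(\cdot)\res_m(w(z))$ are \emph{bilinear} rather than Hermitian forms. This is harmless: Cauchy--Schwarz still applies in the form $\abs{\vec u^T M \vec v} = \abs{\langle \conj{\vec u}, M\vec v\rangle} \le \norm{\vec u}_2 \norm{M\vec v}_2 \le \norm{M}_2\norm{\vec u}_2\norm{\vec v}_2$, and the normality of $A$ ensures the operator norms of $k_z(A)$ and $h_{w(z),z}(A)$ reduce to suprema over $\sigma(A) \subseteq \mathcal{S}_0$ even for complex shifts.
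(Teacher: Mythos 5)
Your proposal is correct and follows exactly the route the paper intends: it bounds the squared determinant factor via the eigenvalue containments $\lambda_j(A_m)\in\mathcal{S}_j$ and the bilinear forms via $\abs{\vec u^T M \vec v}\le\norm{M}_2\norm{\vec u}_2\norm{\vec v}_2$ with $\norm{k_z(A)}_2\le\norm{k_z}_{\mathcal{S}_0}$ and $\norm{h_{w(z),z}(A)}_2\le\norm{h_{w(z),z}}_{\mathcal{S}_0}$, which is precisely the argument of \cref{cor:integral-error-bound-ratkrylov--general} that the paper invokes here without spelling it out. Your additional remark that the forms are bilinear rather than Hermitian when $w(z)$ is complex is a sensible point of care and does not change the estimate.
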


Note that the first bound in \cref{cor:integral-error-bound-ratkrylov--general-quadform} has the same structure as the second one in \cref{cor:integral-error-bound-ratkrylov--general}, with the difference that several of the factors are squared. This is in line with the fact that the error $\norm{\vec b^T f(A) \vec b - \qf_m}_2$ is related to the error in the approximation of $f$ with rational functions of type $(2m-1, 2m-2)$, instead of type $(m-1, m-1)$, so we can expect to have roughly twice the convergence rate compared to the error $\norm{f(A) \vec b - \f_m}_2$ (see e.g. \cite[Remark~3.2]{Guettel13} or \cite[Proposition~4.4]{BRS22}). A similar result has been obtained in \cite[Section~6]{CGMM21} for the Lanczos method.

\cref{cor:integral-error-bound-ratkrylov--general-quadform} can be specialized by choosing different sets $\mathcal{S}_j$ to obtain a priori and a posteriori bounds. This is formalized in the following two corollaries.
\begin{corollary}
	\label{cor:integral-error-bound--a-priori-quadform}
	Under the assumptions of \cref{thm:integral-error-ratkrylov--quadform}, we have the a priori bounds
	\begin{equation*}
		\norm{\vec b^T f(A) \vec b - \qf_m}_2 \le \int_\Gamma \abs*{ \frac{q_{m-1}(z)}{q_{m-1}(w(z))}}^2 \norm{h_{w(z), z}}_{[a, b]}^{2m} \cdot \norm{k_z}_{[a, b]} \cdot \norm{\res_m(w(z))}_2^2 \, \abs{\dde \mu(z)}
	\end{equation*}
	and
	\begin{equation*}
		\norm{\vec b^T f(A) \vec b - \qf_m}_2 \le \int_\Gamma H_m(z) \,\abs{\dde \mu(z)},
	\end{equation*}
	where
	\begin{equation*}
		H_m(z) = \abs*{ \frac{q_{m-1}(z)}{q_{m-1}(w(z))}}^2 \prod_{j = 1}^m \norm{h_{w(z), z}}^{2m+1} \cdot \norm{\res_m(w(z))}_2 \cdot \norm{\err_m(w(z))}_2.
	\end{equation*}
\end{corollary}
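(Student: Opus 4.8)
The plan is to obtain both inequalities as direct specializations of \cref{cor:integral-error-bound-ratkrylov--general-quadform}, by making one fixed choice of the sets $\mathcal{S}_0, \mathcal{S}_1, \dots, \mathcal{S}_m$. Since we are working under the assumptions of \cref{thm:integral-error-ratkrylov--quadform}, the matrix $A$ is symmetric; writing $[a, b] \supseteq \sigma(A)$ for the spectral interval, the projected matrix $A_m = V_m^T A V_m$ is symmetric and each of its eigenvalues is a Rayleigh quotient $\vec u^T A \vec u / \vec u^T \vec u$ with $\vec u \in \vspan(V_m)$, hence lies in $[\lambda_{\text{min}}(A), \lambda_{\text{max}}(A)] \subseteq [a, b]$ (equivalently, this is the Cauchy interlacing fact already used in \cref{subsec:a-priori-bounds}). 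Consequently the choice $\mathcal{S}_0 = \mathcal{S}_1 = \dots = \mathcal{S}_m = [a, b]$ is admissible in \cref{cor:integral-error-bound-ratkrylov--general-quadform}, since $\sigma(A) \subseteq \mathcal{S}_0$ and $\lambda_i(A_m) \in \mathcal{S}_i$ for all $i$.

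With this choice the factors in the two bounds of \cref{cor:integral-error-bound-ratkrylov--general-quadform} collapse: $\norm{h_{w(z), z}}_{\mathcal{S}_j} = \norm{h_{w(z), z}}_{[a, b]}$ for every $j$, so $\prod_{j=1}^m \norm{h_{w(z), z}}^2_{\mathcal{S}_j} = \norm{h_{w(z), z}}^{2m}_{[a, b]}$, while $\norm{k_z}_{\mathcal{S}_0} = \norm{k_z}_{[a, b]}$. Substituting into the first bound of \cref{cor:integral-error-bound-ratkrylov--general-quadform} yields the first a priori bound verbatim. For the second bound, the same substitution turns the product $\prod_{j=1}^m \norm{h_{w(z), z}}^2_{\mathcal{S}_j} \cdot \norm{h_{w(z), z}}_{\mathcal{S}_0}$ appearing in $G_m(z)$ into $\norm{h_{w(z), z}}^{2m+1}_{[a, b]}$, so that $G_m(z)$ becomes exactly $H_m(z)$, giving $\norm{\vec b^T f(A) \vec b - \qf_m}_2 \le \int_\Gamma H_m(z) \, \abs{\dde \mu(z)}$.

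I do not expect any genuine difficulty: the statement is a pure bookkeeping specialization of the already-established general corollary. The only points meriting a line of care are (i) verifying that $\sigma(A_m) \subseteq [a, b]$, so that the constant choice of the $\mathcal{S}_j$ is admissible — the Rayleigh--Ritz / interlacing observation above — and (ii) tracking the exponents, in particular checking that the $2m$ copies of $\norm{h_{w(z), z}}_{[a, b]}$ coming from the squared determinant factor $\det(h_{w(z), z}(A_m))^2$ combine with the single copy coming from $\norm{h_{w(z), z}(A)}_2$ to produce the exponent $2m+1$ in $H_m(z)$. Once these are in place the argument is a one-paragraph substitution, entirely parallel to the way \cref{cor:integral-error-bound--a-priori} was deduced from \cref{cor:integral-error-bound-ratkrylov--general}.
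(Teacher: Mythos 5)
Your proof is correct and is exactly the specialization the paper intends: it states just before these corollaries that they follow from \cref{cor:integral-error-bound-ratkrylov--general-quadform} by choosing $\mathcal{S}_j = [a,b]$ for all $j$, which is your argument (including the exponent count $2m+1$; note the vestigial product symbol in the paper's displayed $H_m(z)$ is a typo for $\norm{h_{w(z),z}}_{[a,b]}^{2m+1}$). Your extra Rayleigh-quotient justification that $\sigma(A_m) \subseteq [a,b]$ is a harmless addition the paper leaves implicit.
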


\begin{corollary}
	\label{cor:integral-error-bound--a-posteriori-quadform}
	Under the assumptions of \cref{thm:integral-error-ratkrylov--quadform}, we have the a posteriori bound
	\begin{equation*}
		\norm{\vec b^T f(A) \vec b - \qf_m}_2 \le \int_\Gamma \abs*{ \frac{q_{m-1}(z)}{q_{m-1}(w(z))} \cdot \frac{\chi_m(w(z))}{\chi_m(z)}}^2 \cdot \norm{k_z}_{[a, b]} \cdot \norm{\res_m(w(z))}_2^2 \, \abs{\dde \mu(z)}. 
	\end{equation*}
\end{corollary}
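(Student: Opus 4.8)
The plan is to derive this a posteriori bound for quadratic forms by specializing the general bound in \cref{cor:integral-error-bound-ratkrylov--general-quadform} to the a posteriori choice of the sets $\mathcal{S}_j$, exactly as \cref{cor:integral-error-bound--a-posteriori} was obtained from \cref{cor:integral-error-bound-ratkrylov--general}. Concretely, I would take $\mathcal{S}_0 = [a, b] \supset \sigma(A)$ and $\mathcal{S}_j = \{\lambda_j(A_m)\}$ for $j = 1, \dots, m$, so that $\norm{h_{w(z), z}}_{\mathcal{S}_j} = \abs{h_{w(z), z}(\lambda_j(A_m))}$ and hence
\begin{equation*}
	\prod_{j=1}^m \norm{h_{w(z), z}}_{\mathcal{S}_j}^2 = \prod_{j=1}^m \abs{h_{w(z), z}(\lambda_j(A_m))}^2 = \abs{\det(h_{w(z), z}(A_m))}^2.
\end{equation*}

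Next I would substitute this identity into the first bound of \cref{cor:integral-error-bound-ratkrylov--general-quadform}, which after this substitution reads
\begin{equation*}
	\norm{\vec b^T f(A) \vec b - \qf_m}_2 \le \int_\Gamma \abs*{\frac{q_{m-1}(z)}{q_{m-1}(w(z))}}^2 \abs{\det(h_{w(z), z}(A_m))}^2 \cdot \norm{k_z}_{[a,b]} \cdot \norm{\res_m(w(z))}_2^2 \, \abs{\dde \mu(z)}.
\end{equation*}
The final step is to recognize that $\det(h_{w(z), z}(A_m)) = \prod_{j=1}^m \frac{\lambda_j(A_m) - w(z)}{\lambda_j(A_m) - z} = \frac{\chi_m(w(z))}{\chi_m(z)}$, using $\chi_m(x) = \prod_{j=1}^m (x - \lambda_j(A_m))$; this is precisely the same manipulation used to pass from \cref{eqn:linsys-error-relation-z-w} to the a posteriori form in \cref{cor:integral-error-bound--a-posteriori}. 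Combining the factor $\abs{q_{m-1}(z)/q_{m-1}(w(z))}^2$ with $\abs{\chi_m(w(z))/\chi_m(z)}^2$ inside a single modulus gives the stated bound.

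There is essentially no obstacle here: the statement is a routine specialization, and the only point requiring a word of care is that the sets $\mathcal{S}_j$ are now allowed to be subsets of $\C$ in \cref{cor:integral-error-bound-ratkrylov--general-quadform}, so one should note that taking $\mathcal{S}_0 = [a,b]$ is legitimate because $A$ is symmetric and $\sigma(A) \subset [a,b] \subset \C$, and that taking $\mathcal{S}_j = \{\lambda_j(A_m)\}$ requires $\lambda_j(A_m) \notin \{z\} \cup \{w(z)\}$ for the factors to be finite — which is guaranteed since $w(z) \in \C \setminus \mathcal{D} \supseteq \C \setminus \sigma(A_m)$ and $z \in \Gamma$ with $\Gamma$ disjoint from $\sigma(A_m) \subset \mathcal{D}$ (so that the integrand is well-defined along $\Gamma$). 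With these observations the proof is immediate.
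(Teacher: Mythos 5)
Your proposal is correct and follows exactly the route the paper intends: the corollary is stated as an immediate specialization of \cref{cor:integral-error-bound-ratkrylov--general-quadform} with $\mathcal{S}_0 = [a,b]$ and $\mathcal{S}_j = \{\lambda_j(A_m)\}$, followed by the identification $\det(h_{w(z),z}(A_m)) = \chi_m(w(z))/\chi_m(z)$. The paper gives no explicit proof, and your argument supplies precisely the omitted details.
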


\begin{remark}
	\label{rem:quadform-bounds-discussion}
	With the same arguments used in the proof of \cref{prop:bounds-minimized-for-w-equal-to-z}, it can be shown that the bounds in \cref{cor:integral-error-bound-ratkrylov--general-quadform} are minimized by taking $w(z) = z$, and with this choice the bounds have much simpler expressions that can be derived directly from~\cref{eqn:basic-integral-error-ratkrylov--quadform}. From these expressions, it is easy to see that for $w(z) = z$ the second bound in \cref{cor:integral-error-bound--a-priori-quadform} is always sharper than the first one, because of the inequality $\norm{\err_m(z)}_2 \le \norm{k_z}_{[a, b]} \norm{\res_m(z)}_2$. 
	In the experiments of \cref{sec:numerical-experiments} we observe this also when $w(z)$ is an approximate minimizer of the integrand, assuming that the exact linear system error and residual norms are known. On the other hand, when using a priori bounds on the linear system error and residual norms, which bound is better depends on the linear system bounds that we use.
	In particular, if we have an a priori bound on the linear system residual and we use it obtain a bound on the error via \cref{eqn:linsys-bound--residual-to-error}, then the first bound in \cref{cor:integral-error-bound--a-priori-quadform} is always sharper than the second one, because of the inequality $\norm{k_z}_{[a, b]} \le \norm{h_{w, z}}_{[a, b]} \norm{k_w}_{[a, b]}$.
\end{remark}

Let us briefly discuss how to select the function $w(z)$. 
In the case of a priori bounds, similarly to the case of $f(A) \vec b$, we should select $w(z)$ as a minimizer of the integrand evaluated at $z$, where in practice $\norm{\res_m(w)}_2$ is replaced by an a priori bound of the form $\psi_m(w) \ge \norm{\res_m(w)}_2$, and $\norm{\err_m(w)}_2$ is replaced by $\varphi_m(w) \ge \norm{\err_m(w)}_2$. In the end, we obtain
\begin{equation*}
	w(z) = \argmin_{w \in \C \setminus \mathcal{D}} \left( \frac{1}{\abs{q_{m-1}(w)}^2} \cdot \norm{h_{w, z}}_{[a, b]}^{2m} \cdot \psi_m(w)^2 \right).
\end{equation*}
for the first bound in \cref{cor:integral-error-bound--a-priori-quadform}, and
\begin{equation*}
	w(z) = \argmin_{w \in \C \setminus \mathcal{D}} \left( \frac{1}{\abs{q_{m-1}(w)}^2} \cdot \norm{h_{w, z}}_{[a, b]}^{2m+1} \cdot \varphi_m(w) \psi_m(w) \right).
\end{equation*}
for the second bound.
In practice, the minimization problem can be solved approximately by taking $w$ in a finite set $\mathcal{W}$, which can be chosen for example as a discretization of $\Gamma$, similarly to the case of bounds for $f(A) \vec b$. 

When considering a posteriori bounds the situation is simpler, since by \cref{eqn:linsys-residual-relation-z-w} for any $w \in \C \setminus \mathcal{D}$ we have 
\begin{equation*}
	\left( \frac{q_{m-1}(z)}{q_{m-1}(w)} \cdot \frac{\chi_m(w)}{\chi_m(z)} \right) \res_m(w) = \res_m(z),
\end{equation*}
so we can choose $w(z) \equiv w$ for any fixed $w$ and obtain the same bound.

\section{Numerical experiments}
\label{sec:numerical-experiments}

In this section we include some experiments that demonstrate the performance of the error bounds for the approximation of $f(A) \vec b$ and $\vec b^T f(A) \vec b$ when the function $f$ is either Cauchy-Stieltjes or analytic in a neighborhood of the spectrum of $A$. 
We use the functions $f(z) = z^{-1/2}$ and $f(z) = \log(1+z)/z$ with the Cauchy-Stieltjes formulations in \cref{eqn:cauchy-stieltjes-examples}, and the function $f(z)= z^{1/2}$ with the Cauchy integral formula on the contour~$\Gamma = (-\infty, 0]$. 
In all experiments, the poles used to construct the rational Krylov subspace are the asymptotically optimal poles for Cauchy-Stieltjes functions proposed in \cite{MasseiRobol21}.

\subsection{Comparison with non-optimized bounds} 
\label{subsec:comparison-with-non-optimized-bounds}

We first conduct a simple experiment to show that the optimization of the parameter $w$ discussed in \cref{subsec:a-priori-bounds} is fundamental for the applicability of a priori bounds. We consider the computation of $A^{-1/2} \vec b$, for a $1000 \times 1000$ symmetric matrix $A$ with logspaced eigenvalues in the interval $[10^{-1}, 10^1]$, and we compare the error with the residual-based a posteriori bound from \cref{cor:integral-error-bound--a-posteriori}, and the error-based a priori bounds from \cref{cor:integral-error-bound--a-priori} for some fixed values of $w$ and with the optimization described in \cref{subsec:a-priori-bounds}. The error norm in the a priori bounds is bounded using \cref{eqn:linsys-bound--residual-to-error} and \cref{thm:apriori-linsys-bound--residual-sylvester}. The results are depicted in \cref{fig:a-priori-optimized-vs-not-opt}. It turns out that the a priori bounds without optimization diverge in this case, and they give no useful information. On the other hand, the bound obtained by optimizing $w$ for each $z \in \Gamma$ used to evaluate the integral correctly captures the convergence behavior, although it is off by a couple of order of magnitude. The a posteriori bound requires no optimization, confirming the discussion in~\cref{subsec:a-posteriori-bounds}.

\begin{figure}[h]
	\centering
	\includegraphics[width=0.65\textwidth]{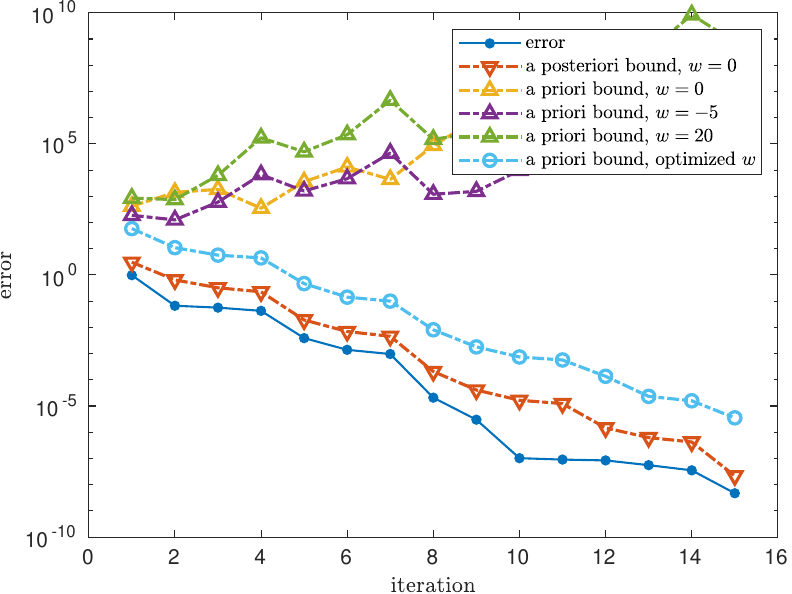}%
	\caption{A priori and a posteriori error bounds for $A^{-1/2} \vec b$, where $A$ is a symmetric $1000 \times 1000$ matrix with logspaced eigenvalues in $[10^{-1}, 10^1]$.}
	\label{fig:a-priori-optimized-vs-not-opt}
\end{figure}

\subsection{Bounds for matrix-vector products} 

We compare the following bounds for the error in the computation of $f(A) \vec b$:
\begin{itemize}
	\item the residual-based a posteriori bound from \cref{cor:integral-error-bound--a-posteriori} with $w = 0$,
	\item the error-based a priori bound from \cref{cor:integral-error-bound--a-priori} with optimized $w$ and linear system error bounded with \cref{eqn:linsys-bound--residual-to-error} and \cref{thm:apriori-linsys-bound--residual-sylvester},
	\item the error-based a posteriori bound from \cref{cor:integral-error-bound--a-posteriori} with optimized $w$ and exact linear system error,
	\item the error-based a priori bound with optimized $w$ and exact linear system error.
\end{itemize}
Note that the third and especially the fourth bound cannot be used in a practical setting, but they can provide interesting theoretical insight. 
Recall that the residual-based a posteriori bound is independent of the choice of $w$, so we can simply take $w = 0$ (see \cref{subsec:a-posteriori-bounds}), but the error-based a posteriori bound does not have this property and hence benefits from the optimization of~$w$, similarly to the a priori bounds. 

The results for the two functions $f(z) = \log(1+z)/z$ and $f(z) = \sqrt{z}$ are shown in \cref{fig:compare-bounds--matvec}. 
We notice that the a priori and a posteriori bounds that use the exact error are slightly better than the residual-based a posteriori bound, and they almost overlap with each other. 
By comparing the a priori bounds that use the exact linear system error with the ones that use the bound in \cref{thm:apriori-linsys-bound--residual-sylvester}, we can conclude that most of the overestimation in the a priori error bounds for $f(A) \vec b$ comes from the overestimation of the linear system error when combining \cref{eqn:linsys-bound--residual-to-error} with \cref{thm:apriori-linsys-bound--residual-sylvester}. Indeed, if we had access to the exact linear system errors in advance, we would be able to obtain a priori the fourth bound in \cref{fig:compare-bounds--matvec}, which captures the convergence behavior very well. Although this scenario is clearly not realistic, an interesting consequence of this observation is that any improvement to a priori bounds for the linear system error would automatically lead to an equal improvement in the a priori bounds for $f(A) \vec b$ of \cref{cor:integral-error-bound--a-priori}, with the limit case of an exact linear system error yielding a bound for $f(A) \vec b$ that is quite close to the true error.

\begin{figure}[h]
	\centering
	\includegraphics[width=0.49\textwidth]{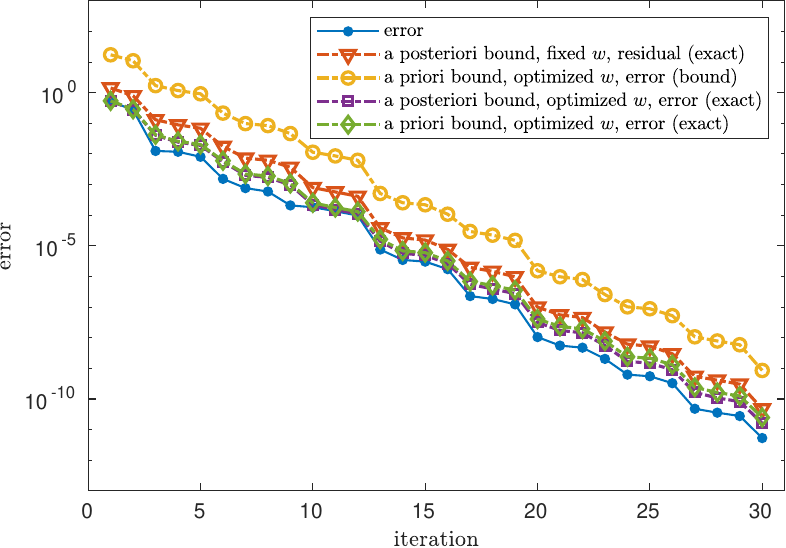}%
	\:%
	\includegraphics[width=0.49\textwidth]{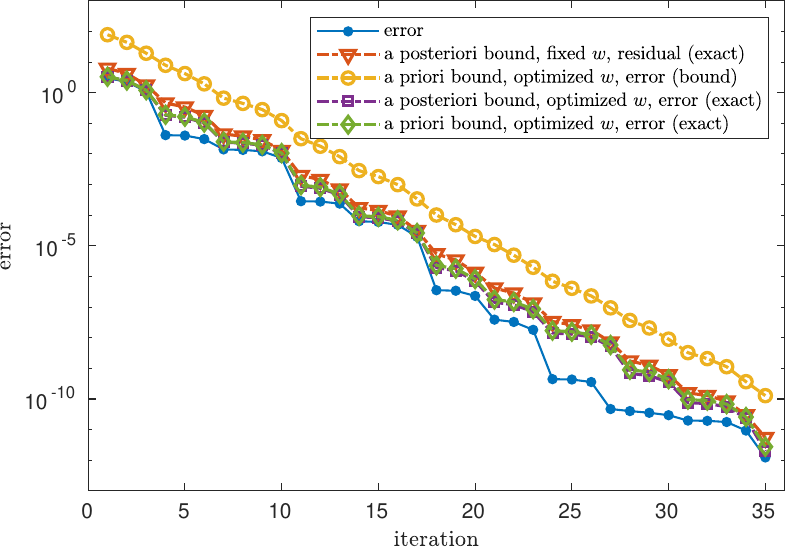}
	\caption{A priori and a posteriori bounds for $f(A) \vec b$, where $A$ is a $1000 \times 1000$ symmetric matrix with logspaced eigenvalues in $[10^{-2}, 10^2]$. Left: $f(z) = \log(1+z)/z$. Right: $f(z) = \sqrt{z}$.}
	\label{fig:compare-bounds--matvec}
\end{figure}

\subsection{Bounds for quadratic forms}
We compare the following bounds for the error in the computation of the quadratic form $\vec b^T f(A) \vec b$:
\begin{itemize}
	\item the a posteriori bound from \cref{cor:integral-error-bound--a-posteriori-quadform} with $w = 0$, using the exact linear system residual,
	\item the first a priori bound from \cref{cor:integral-error-bound--a-priori-quadform} with optimized $w$ and linear system residual bounded using \cref{thm:apriori-linsys-bound--residual-sylvester},
	\item the first a priori bound from \cref{cor:integral-error-bound--a-priori-quadform} with optimized $w$ and exact linear system residual,
	\item the second a priori bound from \cref{cor:integral-error-bound--a-priori-quadform} with optimized $w$ and exact linear system error and residual.
\end{itemize} 
Recall that the second bound in \cref{cor:integral-error-bound--a-priori-quadform} with $\norm{\res_m(w)}_2$  bounded with \cref{thm:apriori-linsys-bound--residual-sylvester} and $\norm{\err_m(w)}_2$ bounded using \cref{eqn:linsys-bound--residual-to-error} is always less accurate than the first one (see \cref{rem:quadform-bounds-discussion}), so we omit it from our experiments. 

The results for the two functions $f(z) = \log(1+z)/z$ and $f(z) = \sqrt{z}$ are shown in \cref{fig:compare-bounds--quadform}, where we also include the error for $f(A) \vec b$ for comparison. We can notice that the convergence for the quadratic form $\vec b^T f(A) \vec b$ is roughly twice as fast as the convergence for the matrix-vector product with $f(A)$. 
Similarly to the case of the bounds for $f(A) \vec b$, the a priori bound with exact linear system residual is significantly better than the bound that uses \cref{thm:apriori-linsys-bound--residual-sylvester}. 
Moreover, the a priori bound that uses both the exact residual and the exact error is even more accurate; this is in agreement with the observations in \cref{rem:quadform-bounds-discussion}, where we show that for the best possible choice of $w$, i.e.~$w(z) = z$, the bound in \cref{cor:integral-error-bound--a-priori-quadform} that uses both the error and the residual is more accurate than the bound that only uses $\norm{\res_m(z)}_2$. In the left panel of \cref{fig:compare-bounds--quadform}, we can see that in the last iteration some of the bounds are below the error curve because the error in the Krylov approximation is starting to stagnate at about $10^{-14}$. 

\begin{figure}[h]
	\centering
	\includegraphics[width=0.49\textwidth]{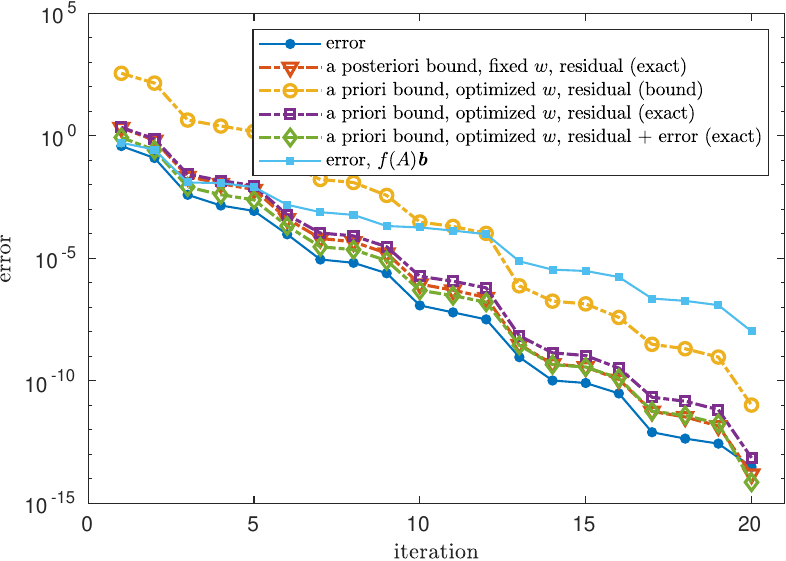}%
	\:%
	\includegraphics[width=0.49\textwidth]{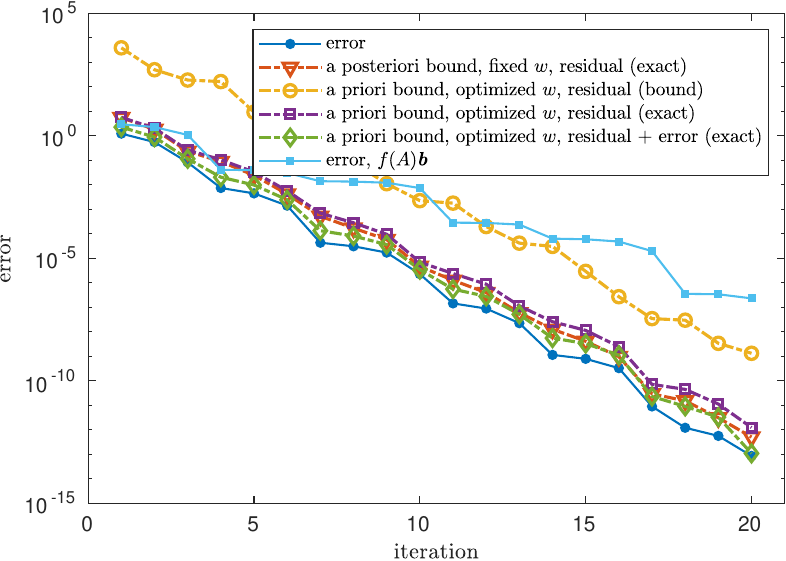}
	\caption{A priori and a posteriori bounds for $\vec b^T f(A) \vec b$, where $A$ is a $1000 \times 1000$ symmetric matrix with logspaced eigenvalues in $[10^{-2}, 10^2]$. Left: $f(z) = \log(1+z)/z$. Right: $f(z) = \sqrt{z}$.}
	\label{fig:compare-bounds--quadform}
\end{figure}

\subsection{Comparison with Lanczos setting} 
In this section we replicate the experiment in~\cite[Figure~4b]{CGMM21} to show that the optimization of $w$ can also lead to improvements for the error bounds in the polynomial Krylov (Lanczos) setting.

We consider the computation of $A^{1/2} \vec b$, where $A$ is a $1000 \times 1000$ symmetric matrix with uniformly spaced eigenvalues in $[10^{-2}, 10^2]$, using a polynomial Krylov method.
The a priori and a posteriori bounds used in \cite{CGMM21} can be obtained as a special case of the error-based bounds in \cref{cor:integral-error-bound--a-priori,cor:integral-error-bound--a-posteriori}, by taking $q_{m-1}(z) \equiv 1$ and fixing $w = 0$. 
We compare these bounds with the ones obtained by optimizing $w$ as described in \cref{subsec:a-priori-bounds}. Note that since we are using $\norm{\err_m(w)}_2$ instead of $\norm{\res_m(w)}_2$ in the a posteriori bound, the discussion in \cref{subsec:a-posteriori-bounds} does not apply and the bound can be improved by optimizing the choice of~$w$.   
Similarly to \cite{CGMM21}, we assume that the error norms $\norm{\err_m(w)}_2$ are known exactly. This assumption allows us to theoretically investigate the error bounds by eliminating any dependence on possibly inaccurate bounds on the linear system error norms, although it is not realistic in a practical scenario.

The results of the comparison are shown in \cref{fig:compare-bounds--poly}, where the optimized bounds are plotted using the same color as the corresponding bound from \cite{CGMM21} with fixed $w = 0$. We can see that optimizing $w$ gives significant improvements, especially for the a priori bound; the optimized a posteriori bound is practically overlapping with the error curve. 
It should be mentioned that the plots in \cite[Figure~4]{CGMM21} consider the $A$-norm of the error, while here we consider the $2$-norm, so the error curves shown in \cref{fig:compare-bounds--poly} are slightly different from the ones in \cite{CGMM21}.   

\begin{figure}[h]
	\centering
	\includegraphics[width=0.65\textwidth]{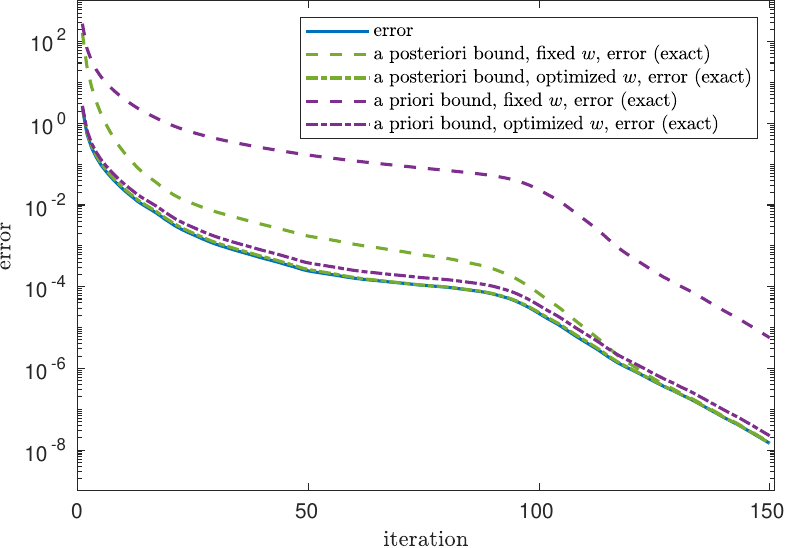}%
	\caption{Comparison of bounds for $A^{1/2} \vec b$, where $A$ is a $1000 \times 1000$ symmetric matrix with linearly spaced eigenvalues in $[10^{-2}, 10^2]$, with and without optimization of $w$.}
	\label{fig:compare-bounds--poly}
\end{figure}

\section{Conclusions}
\label{sec:conclusions}

We have derived error bounds for the approximation of $f(A) \vec b$ and $ \vec b^T f(A) \vec b$ with rational Krylov methods, by exploiting properties of rational Arnoldi decomposition and an integral representation of the function $f$. The bounds that we have obtained generalize some bounds for Lanczos-based matrix function approximation given in \cite{CGMM21}. In the rational Krylov setting, the more complicated expression for the matrix function error poses an additional challenge compared to the polynomial Krylov case; we were able to overcome this obstacle by numerically optimizing the parameter $w$ that appears in the bounds. The same strategy can be also applied to the Lanczos setting, and it can lead to improvements in those bounds as well.

\bibliographystyle{siam}
\bibliography{manuscript-biblio}

\begin{thebibliography}{10}

\bibitem{Beckermann11}
{\sc B.~Beckermann}, {\em An error analysis for rational {G}alerkin projection
  applied to the {S}ylvester equation}, SIAM J. Numer. Anal., 49 (2011),
  pp.~2430--2450.

\bibitem{BeckermannReichel09}
{\sc B.~Beckermann and L.~Reichel}, {\em Error estimates and evaluation of
  matrix functions via the {F}aber transform}, SIAM J. Numer. Anal., 47 (2009),
  pp.~3849--3883.

\bibitem{BRS22}
{\sc M.~Benzi, M.~Rinelli, and I.~Simunec}, {\em Computation of the von
  {N}eumann entropy of large matrices via trace estimators and rational
  {K}rylov methods}, Numer. Math. (2023).

\bibitem{BenziSimunec22}
{\sc M.~Benzi and I.~Simunec}, {\em Rational {K}rylov methods for fractional
  diffusion problems on graphs}, BIT, 62 (2022), pp.~357--385.

\bibitem{BerljafaGuettel15}
{\sc M.~Berljafa and S.~G\"{u}ttel}, {\em Generalized rational {K}rylov
  decompositions with an application to rational approximation}, SIAM J. Matrix
  Anal. Appl., 36 (2015), pp.~894--916.

\bibitem{CGMM21}
{\sc T.~Chen, A.~Greenbaum, C.~Musco, and C.~Musco}, {\em Error bounds for
  {L}anczos-{B}ased matrix function approximation}, SIAM J. Matrix Anal. Appl.,
  43 (2022), pp.~787--811.

\bibitem{DruskinKnizhnerman98}
{\sc V.~Druskin and L.~Knizhnerman}, {\em Extended {K}rylov subspaces:
  approximation of the matrix square root and related functions}, SIAM J.
  Matrix Anal. Appl., 19 (1998), pp.~755--771.

\bibitem{FGS14}
{\sc A.~Frommer, S.~G\"{u}ttel, and M.~Schweitzer}, {\em Efficient and stable
  {A}rnoldi restarts for matrix functions based on quadrature}, SIAM J. Matrix
  Anal. Appl., 35 (2014), pp.~661--683.

\bibitem{FrommerSchweitzer16}
{\sc A.~Frommer and M.~Schweitzer}, {\em Error bounds and estimates for
  {K}rylov subspace approximations of {S}tieltjes matrix functions}, BIT, 56
  (2016), pp.~865--892.

\bibitem{FrommerSimoncini08}
{\sc A.~Frommer and V.~Simoncini}, {\em Stopping criteria for rational matrix
  functions of {H}ermitian and symmetric matrices}, SIAM J. Sci. Comput., 30
  (2008), pp.~1387--1412.

\bibitem{GolubVanLoan13-book}
{\sc G.~H. Golub and C.~F. Van~Loan}, {\em Matrix computations}, Johns Hopkins
  Studies in the Mathematical Sciences, Johns Hopkins University Press,
  Baltimore, MD, fourth~ed., 2013.

\bibitem{GuettelThesis}
{\sc S.~G\"{u}ttel}, {\em Rational {K}rylov {M}ethods for {O}perator
  {F}unctions}, PhD thesis, Technische Universit{\"a}t Bergakademie Freiberg,
  Germany, 2010.
\newblock Dissertation available as MIMS Eprint 2017.39.

\bibitem{Guettel13}
\leavevmode\vrule height 2pt depth -1.6pt width 23pt, {\em Rational {K}rylov
  approximation of matrix functions: numerical methods and optimal pole
  selection}, GAMM-Mitt., 36 (2013), pp.~8--31.

\bibitem{GuettelKnizhnerman13}
{\sc S.~G\"{u}ttel and L.~Knizhnerman}, {\em A black-box rational {A}rnoldi
  variant for {C}auchy-{S}tieltjes matrix functions}, BIT, 53 (2013),
  pp.~595--616.

\bibitem{Higham08-book}
{\sc N.~J. Higham}, {\em Functions of Matrices}, Society for Industrial and
  Applied Mathematics, 2008.

\bibitem{MasseiRobol21}
{\sc S.~Massei and L.~Robol}, {\em Rational {K}rylov for {S}tieltjes matrix
  functions: convergence and pole selection}, BIT, 61 (2021), pp.~237--273.

\bibitem{MoretNovati04}
{\sc I.~Moret and P.~Novati}, {\em R{D}-rational approximations of the matrix
  exponential}, BIT, 44 (2004), pp.~595--615.

\bibitem{MoretNovati19}
{\sc I.~Moret and P.~Novati}, {\em Krylov subspace methods for functions of
  fractional differential operators}, Math. Comp., 88 (2019), pp.~293--312.

\bibitem{PPV95}
{\sc C.~C. Paige, B.~N. Parlett, and H.~A. van~der Vorst}, {\em Approximate
  solutions and eigenvalue bounds from {K}rylov subspaces}, Numer. Linear
  Algebra Appl., 2 (1995), pp.~115--133.

\bibitem{Ruhe94}
{\sc A.~Ruhe}, {\em Rational {K}rylov algorithms for nonsymmetric eigenvalue
  problems}, in Recent {A}dvances in {I}terative {M}ethods, vol.~60 of IMA Vol.
  Math. Appl., Springer, New York, 1994, pp.~149--164.

\bibitem{Saad92}
{\sc Y.~Saad}, {\em Analysis of some {K}rylov subspace approximations to the
  matrix exponential operator}, SIAM J. Numer. Anal., 29 (1992), pp.~209--228.

\bibitem{Saad03}
\leavevmode\vrule height 2pt depth -1.6pt width 23pt, {\em Iterative methods
  for sparse linear systems}, Society for Industrial and Applied Mathematics,
  Philadelphia, PA, second~ed., 2003.

\bibitem{Simoncini03}
{\sc V.~Simoncini}, {\em Restarted {F}ull {O}rthogonalization {M}ethod for
  shifted linear systems}, BIT, 43 (2003), pp.~459--466.

\bibitem{EFLSV02}
{\sc J.~{van den Eshof}, A.~Frommer, T.~Lippert, K.~Schilling, and H.~{van der
  Vorst}}, {\em Numerical methods for the {QCD}d overlap operator. {I}.
  {S}ign-function and error bounds}, Computer Physics Communications, 146
  (2002), pp.~203--224.

\end{thebibliography}

\end{document}